\numberwithin{equation}{section}
\numberwithin{figure}{section}
\theoremstyle{plain}
\newtheorem{thm}{\protect\theoremname}
\providecommand{\theoremname}{Theorem}
\begin{document}
\global\long\def\essinf{\operatornamewithlimits{ess\,inf}}%

\global\long\def\esssup{\operatornamewithlimits{ess\,sup}}%

\global\long\def\supp{\operatornamewithlimits{supp}}%

\title{Upper endpoint estimates and extrapolation for commutators}
\author{Kangwei Li}
\address{(Kangwei Li) Center for Applied Mathematics, Tianjin University, Tianjin 300072, The People’s Republic of China} 
\email{kli@tju.edu.cn}
\author{Sheldy Ombrosi}
\address{(Sheldy Ombrosi) Departamento de An\'alisis Matem\'atico y Matem\'atica Aplicada. Facultad de Ciencias Matem\'aticas. Universidad Complutense
(Madrid, Spain). Departamento de Matem\'atica e Instituto de Matem\'atica de Bah\'ia Blanca. Universidad Nacional
del Sur - CONICET (Bah\'{\i}a Blanca, Argentina).}
\email{sombrosi@ucm.es}
\author{Israel P. Rivera-Ríos}
\address{(Israel P. Rivera R\'ios) Departamento de An\'alisis Matem\'atico, Estad\'{\i}stica e Investigaci\'on Operativa
y Matem\'atica Aplicada. Facultad de Ciencias. Universidad de M\'alaga
(M\'alaga, Spain). Departamento de Matem\'atica. Universidad Nacional
del Sur (Bah\'{\i}a Blanca, Argentina).}
\email{israelpriverarios@uma.es}

\thanks{
The first author was supported by the National Natural Science Foundation of China (Grant No. 12001400). The second and the third authors were partially supported by FONCyT PICT 2018-02501. 
The second author was partially supported by Spanish Government Ministry of Science and Innovation through grant PID2020-113048GB-I00.
The third author was partially supported as well by FONCyT PICT 2019-00018 and by Junta de Andaluc\'ia UMA18FEDERJA002.}
\maketitle
\begin{abstract}In this note we revisit the upper endpoint estimates for commutators following the line
by Harboure, Segovia and Torrea \cite{HST}. Relying upon the suitable $BMO$ subspace suited for the commutator
that was introduced by Accomazzo \cite{A}, we obtain a counterpart for commutators of the upper endpoint extrapolation result by Harboure, Mac\'ias and Segovia \cite{HMS}. Multilinear counterparts
are provided as well.
\end{abstract}

\section{Introduction and main results}

Extrapolation has been a fruitful area of research since the 80s. First
results in that direction were due to Rubio de Francia \cite{RdF1,RdF2}.
We briefly discuss the general principle behind that kind of results
in the following lines.

We say that $w$ is a weight if it is a non-negative locally integrable function on $\mathbb{R}^{n}$.
Recall that $w\in A_{p}$ for $1<p<\infty$ if
\[
[w]_{A_{p}}=\sup_{Q}\fint_{Q}w\left( \fint_Q w^{-\frac{1}{p-1}}\right)^{p-1}<\infty
\]
 and that $w\in A_{1}$ if 
\[
[w]_{A_{1}}=\left\Vert \frac{Mw}{w}\right\Vert _{L^{\infty}}<\infty,
\]
where $M$ stands for the Hardy-Littlewood maximal function 
\[
Mf(x)=\sup_{x\in Q} \fint_{Q}|f(y)|dy,
\]
where each $Q$ is a cube of $\mathbb{R}^{n}$ with its sides parallel
to the axis.

A fundamental property of the $A_{p}$ classes is that they characterize the
weighted $L^{p}$ boundedness of the Hardy-Littlewood maximal operator
and they are good weights for a number of operators in the theory
such as singular integrals, commutators and some further ones.

The Rubio de Francia extrapolation results says that if $T$ is a
sublinear operator such that for some $1<p_{0}<\infty$ 
\begin{equation}
\|Tf\|_{L^{p_{0}}(w)}\leq c_{w,T,p_{0}}\|f\|_{L^{p_{0}}(w)}\label{eq:hypTfp0}
\end{equation}
for every $w\in A_{p_{0}}$, then 
\begin{equation}
\|Tf\|_{L^{p}(w)}\leq c_{w,T,p}\|f\|_{L^{p}(w)}\label{eq:Tfp}
\end{equation}
for every $w\in A_{p}$ and every $1<p<\infty$.

This approach has been extensively studied by a number of authors
in a wide variety of settings. For instance, in the linear setting
there are fundamental works due to Cruz-Uribe, Martell and Pérez \cite{CUMBook,CUMP1,CUMP2,CUMP3,CUM,CUP},
Duoandikoetxea \cite{D1,D2}, Dragicevic, Grafakos, Petermichl, Pereyra
\cite{DGPP}, Harboure, Macías, Segovia \cite{HMS,HMS2}. After a
number of intermediate results in the multilinear setting (see for
instance \cite{GM,CGMS,CUM}) the question was succesfully solved
in the last years, in works such as \cite{LMO,LMMOV,N}.

A useful development in the area since Rubio de Francia's pioneering
works consisted in learning that the operator involved in \eqref{eq:hypTfp0}
and \eqref{eq:Tfp} actually plays no role. To be more precise, it can
be replaced by a condition on pairs of functions. Assume that $\mathcal{F}$
is a family of pairs of functions such that for some $1<p_{0}<\infty$
\begin{equation}
\|f\|_{L^{p_{0}}(w)}\leq c_{w,T,p_{0}}\|g\|_{L^{p_{0}}(w)}\label{eq:hyp(f,g)p0}
\end{equation}
for every $(f,g)\in\mathcal{F}$ and every $w\in A_{p_{0}}$, then
\begin{equation}
\|f\|_{L^{p}(w)}\leq c_{w,T,p}\|g\|_{L^{p}(w)}\label{eq:(f,g)p}
\end{equation}
for every $(f,g)\in\mathcal{F}$, for every $w\in A_{p}$ and every
$1<p<\infty$.

Another line of research would consist in considering the endpoints,
namely $p_{0}=\infty$ or $p_{0}=1$ as a ``departing'' point for
extrapolation. For instance, the following result was obtained in
\cite{GC,CUMBook}
\begin{thm}
\label{thm:ExtrapInfty}Let $(f,g)$ be a pair of functions and suppose
that 
\[
\|gw\|_{L^{\infty}}\le c_{w}\|fw\|_{L^{\infty}}
\]
holds for all $w$ with $w^{-1}\in A_{1}$, where $c_{w}$ depends
only on $[w^{-1}]_{A_{1}}$. Then for all $1<p<\infty$ and all $w\in A_{p}$,
we have 
\[
\|g\|_{L^{p}(w)}\le\tilde{c}_{w}\|f\|_{L^{p}(w)},
\]
where $\tilde{c}_{w}$ depends only on $[w]_{A_{p}}$. 
\end{thm}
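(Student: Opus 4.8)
The plan is to derive a pointwise bound of the shape $|g|\lesssim\|f\|_{L^{p}(w)}\,u$ for a cleverly chosen weight $u$ and then simply take $L^p(w)$-norms; no genuine duality is needed. Throughout, $n$ and $p$ are fixed and constants are permitted to depend on them. We may assume $0<\|f\|_{L^{p}(w)}<\infty$, the cases $\|f\|_{L^{p}(w)}\in\{0,\infty\}$ being trivial (in the first case the hypothesis, applied with any admissible weight, forces $g=0$ a.e.).

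First I would rewrite the hypothesis in pointwise form. Given any $u\in A_{1}$, apply the assumption to the weight $\omega:=u^{-1}$; this is legitimate because $\omega$ is a genuine weight (an $A_{1}$ weight is, on every cube, bounded below by a positive multiple of its average there, so $u^{-1}$ is locally bounded) and $\omega^{-1}=u\in A_{1}$. This gives $\|g\,u^{-1}\|_{L^{\infty}}\le c_{[u]_{A_{1}}}\|f\,u^{-1}\|_{L^{\infty}}$, i.e.
\[
|g(x)|\le c_{[u]_{A_{1}}}\,\|f\,u^{-1}\|_{L^{\infty}}\,u(x)\qquad\text{for a.e.\ }x .
\]
So the task reduces to producing, for the given $w\in A_{p}$ and $f$, a weight $u\in A_{1}$ with $u\ge|f|/\|f\|_{L^{p}(w)}$ a.e.\ (so that $\|f\,u^{-1}\|_{L^{\infty}}\le\|f\|_{L^{p}(w)}$), with $\|u\|_{L^{p}(w)}$ under control, and---this is the crucial point---with $[u]_{A_{1}}$ controlled purely in terms of $[w]_{A_{p}}$, so that $c_{[u]_{A_{1}}}$ becomes an admissible constant.

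This is achieved by the Rubio de Francia iteration run with the ordinary Hardy--Littlewood maximal operator but with norms measured in $L^{p}(w)$. Since $w\in A_{p}$ with $1<p<\infty$, $M$ is bounded on $L^{p}(w)$ with a norm $B=B(n,p,[w]_{A_{p}})$. Setting $\phi:=|f|/\|f\|_{L^{p}(w)}$, so $\|\phi\|_{L^{p}(w)}=1$, I would define
\[
u:=\sum_{j=0}^{\infty}\frac{M^{j}\phi}{(2B)^{j}} .
\]
Then $u\ge\phi$ a.e.; $\|u\|_{L^{p}(w)}\le\sum_{j}(2B)^{-j}B^{j}=2$; and $Mu\le 2B\,u$ pointwise, which is exactly the statement that $u\in A_{1}$ with $[u]_{A_{1}}\le 2B$. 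The point worth stressing is that, although the norms live in $L^{p}(w)$, the output $u$ is an $A_{1}$ weight \emph{with respect to Lebesgue measure}, since the $A_{1}$ condition is nothing but the pointwise inequality $Mu\lesssim u$ which the iteration builds in automatically; this is precisely what lets us feed $u$ back into the hypothesis.

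It remains to assemble the pieces. With $u$ as above, $u\ge\phi$ yields $\|f\,u^{-1}\|_{L^{\infty}}\le\|f\|_{L^{p}(w)}$, while $[u]_{A_{1}}\le 2B$ depends only on $[w]_{A_{p}}$; hence the displayed pointwise bound becomes $|g(x)|\le\tilde c\,\|f\|_{L^{p}(w)}\,u(x)$ a.e.\ with $\tilde c=\tilde c([w]_{A_{p}})$. Taking $L^{p}(w)$-norms and using $\|u\|_{L^{p}(w)}\le 2$ gives $\|g\|_{L^{p}(w)}\le 2\tilde c\,\|f\|_{L^{p}(w)}$, a constant depending only on $[w]_{A_{p}}$, as claimed. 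I expect the only delicate point to be the bookkeeping of constants: checking that $[u]_{A_{1}}$, and therefore the constant $c_{[u]_{A_{1}}}$ furnished by the hypothesis, is controlled solely through $B$, hence through $[w]_{A_{p}}$. The rest is the routine Rubio de Francia estimate.
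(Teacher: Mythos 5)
Your proof is correct. The paper does not prove Theorem \ref{thm:ExtrapInfty} itself---it quotes it from \cite{GC,CUMBook}---and your argument (the Rubio de Francia iteration in $L^{p}(w)$ producing an $A_{1}$ majorant $u\ge |f|/\|f\|_{L^{p}(w)}$ with $\|u\|_{L^{p}(w)}\le 2$ and $[u]_{A_{1}}\le 2\|M\|_{L^{p}(w)}$, then feeding the weight $u^{-1}$ into the $L^{\infty}$ hypothesis and taking $L^{p}(w)$-norms) is precisely the standard proof given in those references.
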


There are a number of operators that do not map $L^{\infty}$ into
$L^{\infty}$ such as the Hilbert transform. However for the
Hilbert transform $H$ itself and even for a larger class of operators,
the Calderón-Zygmund operators, it is possible to show that they map
$L^{\infty}$ into $BMO$. Weighted versions of that result were studied
first in \cite{MW}. There it was shown that if $w\in A_{1}$, then
\begin{equation}
 \fint_{Q}|Hf-(Hf)_{Q}|\leq C\left\Vert \frac{f}{w}\right\Vert _{L^{\infty}}\essinf_{Q}w(x).\label{eq:oscH}
\end{equation}
In view of this estimate, it seems natural to think about extending
this result to Calderón-Zygmund operators, and also, within the framework
of extrapolation whether it would be possible to extrapolate from
that weighted $L^{\infty}\rightarrow BMO$ bound in order to obtain
weighted $L^{p}$ estimates. Those questions were answered in the
positive in the inspiring paper \cite{HMS} by Harboure, Mac\'ias and Segovia. In that work the following
extrapolation result was settled.
\begin{thm}
Let $T$ be a sublinear operator defined on $\mathcal{C}_{0}^{\infty}(\mathbb{R}^{n})$
satisfying that for any cube $Q\subset\mathbb{R}^{n}$ and any $w\in A_{1}$
\[
 \fint_{Q}|Tf-(Tf)_{Q}|\leq c_{w,T}\left\Vert \frac{f}{w}\right\Vert _{L^{\infty}}\essinf_{Q}w.
\]
 Then for every $1<p<\infty$ and every $w\in A_{p}$ we have that
\[
\|Tf\|_{L^{p}(w)}\leq c_{w}\|f\|_{L^{p}(w)}.
\]
\end{thm}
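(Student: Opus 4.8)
The plan is to run the Fefferman--Stein sharp maximal function machinery. Writing
\[
M^{\#}g(x)=\sup_{Q\ni x}\fint_{Q}|g-g_{Q}|,
\]
I would first reinterpret the hypothesis as a weighted $L^{\infty}$ endpoint estimate for the sublinear operator $f\mapsto M^{\#}(Tf)$, then extrapolate that estimate by means of Theorem~\ref{thm:ExtrapInfty}, and finally pass from $M^{\#}(Tf)$ back to $Tf$ through the weighted Fefferman--Stein inequality for $A_{\infty}$ weights. Note that the very form of the hypothesis presupposes $Tf\in L^{1}_{\mathrm{loc}}$, so all of these objects make sense.

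\emph{Step 1 (recasting the hypothesis).} Let $u$ be any weight with $u^{-1}\in A_{1}$ and set $v:=u^{-1}\in A_{1}$. By hypothesis, for every cube $Q$,
\[
\fint_{Q}|Tf-(Tf)_{Q}|\le c_{v,T}\Big\|\frac{f}{v}\Big\|_{L^{\infty}}\essinf_{Q}v=c_{v,T}\|uf\|_{L^{\infty}}\,\essinf_{Q}v .
\]
Since $\essinf_{Q}v\le v(x)$ for a.e.\ $x\in Q$ — and restricting, as one may, the supremum defining $M^{\#}$ to the countable family of cubes with rational centres and rational side lengths in order to sidestep any measurability issue — one gets $M^{\#}(Tf)(x)\le c_{v,T}\|uf\|_{L^{\infty}}\,v(x)$ for a.e.\ $x$, i.e.
\[
\big\|u\,M^{\#}(Tf)\big\|_{L^{\infty}}\le c_{v,T}\,\|uf\|_{L^{\infty}} .
\]
Here it is essential that the constant $c_{w,T}$ in the hypothesis depends on $w$ only through $[w]_{A_{1}}$ — exactly as in the model estimate~\eqref{eq:oscH} — so that the constant above depends on $u$ only through $[u^{-1}]_{A_{1}}$, which is precisely what is needed to feed the pair $\big(f,\,M^{\#}(Tf)\big)$ into Theorem~\ref{thm:ExtrapInfty}.

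\emph{Step 2 (extrapolation and Fefferman--Stein).} The bound of Step 1 says exactly that the pair $\big(f,\,M^{\#}(Tf)\big)$ satisfies the hypothesis of Theorem~\ref{thm:ExtrapInfty}; hence, for every $1<p<\infty$ and every $w\in A_{p}$,
\[
\big\|M^{\#}(Tf)\big\|_{L^{p}(w)}\le c_{w}\,\|f\|_{L^{p}(w)} .
\]
On the other hand $A_{p}\subset A_{\infty}$, so by the weighted Fefferman--Stein inequality together with Lebesgue differentiation ($Tf\in L^{1}_{\mathrm{loc}}$),
\[
\|Tf\|_{L^{p}(w)}\le\|M(Tf)\|_{L^{p}(w)}\le C_{[w]_{A_{\infty}}}\big\|M^{\#}(Tf)\big\|_{L^{p}(w)} ,
\]
and combining the last two displays yields the claim. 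The one genuinely delicate point is the a priori integrability hidden in the weighted Fefferman--Stein inequality, since one cannot assume in advance that $Tf\in L^{p}(w)$, that being the conclusion; this is handled in the customary way by applying the sharp maximal inequality to suitable truncations of $Tf$, for which the sharp maximal function is controlled in terms of $M^{\#}(Tf)$, and then letting the truncation parameter tend to infinity by monotone convergence, using that Step~2 has already placed $M^{\#}(Tf)$ in $L^{p}(w)$ (see \cite{HMS,CUMBook} for this routine argument). Everything else is soft once the hypothesis has been cast in the form of Step~1.
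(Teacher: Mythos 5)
Your proof is correct and follows essentially the route this paper itself takes when proving the commutator analogue, Theorem \ref{thm:ExtrapLinear}: recast the hypothesis as $\|u\,M^{\#}(Tf)\|_{L^{\infty}}\lesssim\|uf\|_{L^{\infty}}$ for all $u$ with $u^{-1}\in A_{1}$, extrapolate with Theorem \ref{thm:ExtrapInfty}, and conclude with the weighted Fefferman--Stein inequality. The paper only quotes this statement from \cite{HMS} without proof, and your handling of the a priori integrability needed in the Fefferman--Stein step is at least as careful as the paper's own use of that inequality in its proof of Theorem \ref{thm:ExtrapLinear}.
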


Quite recently in \cite{CPR}, a quantitative version of this result
was obtained. In that paper it was shown that if $\delta\in(0,1)$
and
\[
\inf_{c\in\mathbb{R}}\left( \fint_{Q}|Tf-c|^{\delta}\right)^{\frac{1}{\delta}}\leq c_{n,\delta,T}\varphi([w]_{A_{1}})\left\Vert \frac{f}{w}\right\Vert _{L^{\infty}}\essinf_{Q}w,
\]
then 
\[
\|Tf\|_{L^{p}(w)}\leq c_{n}\varphi(\left\Vert M\right\Vert _{L^{p}(w)})\left\Vert M\right\Vert _{L^{p'}(\sigma)}\|f\|_{L^{p}(w)},
\]
where $\sigma=w^{-\frac{1}{p-1}}$. Note that since $\left\Vert M\right\Vert _{L^{p}(w)}\lesssim[w]_{A_{p}}^{\frac{1}{p-1}}$
such an estimate yields that 
\[
\|Tf\|_{L^{p}(w)}\leq c_{n}\varphi([w]_{A_{p}}^{\frac{1}{p-1}})[w]_{A_{p'}}\|f\|_{L^{p}(w)}.
\]
In the same paper it is shown that for Calderón-Zygmund operators
\begin{equation}
\inf_{c\in\mathbb{R}}\left( \fint_{Q}|Tf-c|^{\delta}\right)^{\frac{1}{\delta}}\leq c_{n,\delta,T}[w]_{A_{1}}\left\Vert \frac{f}{w}\right\Vert _{L^{\infty}}\essinf_{Q}w,\label{eq:OscT}
\end{equation}
namely $\varphi(t)=t$ and hence the sharp exponent for the $A_{p}$
constant $\max\left\{ 1,\frac{1}{p-1}\right\} $ for that class is
not recovered. Such a fact is not surprising since current best known extrapolation argument from the lower endpoint neither recovers the sharp estimate.
At this point we would like to note that a way more general version of the aforementioned extrapolation result, replacing $L^p(w)$ spaces by function Banach spaces and the $A_p$ constant by suitable boundedness constants of the maximal function over those spaces, was obtained very recently in \cite{NR}. Also a quantitative multilinear result in that direction was provided in \cite[Corollary 4.14]{N} 

Now we turn our attention to our contribution in this work. We recall
that given $b\in BMO$, the Coifman-Rochberg-Weiss commutator is defined
as
\[
[b,T]f(x)=b(x)Tf(x)-T(bf)(x).
\]
It is well known that $[b,T]$ is bounded on $L^{p}(w)$ and that,
as Pérez showed in \cite{P}, $[b,T]$ is not of weak type $(1,1)$
but it satisfies the following estimate instead
\[
w\left(\left\{ |[b,T]f(x)|>t\right\} \right)\lesssim[w]_{A_{1}}^{2}\log(e+[w]_{A_{1}})\int_{\mathbb{R}^{n}}\Phi\left(\frac{|f|}{t}\right)w
\]
where $\Phi(t)=t\log(e+t)$. The quantitative dependence was obtained
in \cite{LORR}.

In view of (\ref{eq:oscH}) and (\ref{eq:OscT}) one may wonder what
can be said about commutators. In \cite[Theorem A]{HST}, Harboure,
Segovia and Torrea provided the following result.
\begin{thm}
Let $T$ be a Calderón-Zygmund operator and let $b\in BMO$. Then
the following statements are equivalent
\begin{enumerate}
\item For every ball $B$ and every $f\in L_{c}^{\infty}(\mathbb{R}^{n})$,
\begin{equation}
\fint_{B}|[b,T]f(x)-([b,T]f)_{B}|dx\lesssim\left\Vert f\right\Vert _{L^{\infty}}.\label{eq:osc=00005Bb,T=00005DHMS}
\end{equation}
\item The function $b$ satisfies the following condition. For any cube
$Q$ and $u\in Q$
\[
 \Big(\fint_{Q}|b-b_{Q}|\Big)T(f\chi_{(2Q)^{c}})(u)\leq C\|f\|_{L^{\infty}(\mathbb{R}^{n})}
\]
for every $f\in L_{c}^{\infty}(\mathbb{R}^{n})$.
\end{enumerate}
\end{thm}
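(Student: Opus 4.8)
The plan is to fix a ball $B$ and replace it, at the cost of dimensional constants only, by a cube $Q$ with comparable center and sidelength (harmless, since both $\fint$ and the $BMO$ seminorm are unaffected). Fix $f\in L_{c}^{\infty}(\mathbb{R}^{n})$ and split $f=f\chi_{2Q}+f\chi_{(2Q)^{c}}=:f_{1}+f_{2}$. Starting from $([b,T]f)(x)=(b(x)-b_{Q})Tf(x)-T((b-b_{Q})f)(x)$ one obtains, for $x\in Q$,
\[
[b,T]f(x)=\underbrace{(b(x)-b_{Q})Tf_{1}(x)}_{\mathrm{I}}-\underbrace{T((b-b_{Q})f_{1})(x)}_{\mathrm{II}}+\underbrace{(b(x)-b_{Q})Tf_{2}(x)}_{\mathrm{III}}-\underbrace{T((b-b_{Q})f_{2})(x)}_{\mathrm{IV}}.
\]
Writing $O_{Q}(g):=\fint_{Q}|g-g_{Q}|$ — which is subadditive and satisfies $O_{Q}(g)\le2\fint_{Q}|g|$ — the strategy is to prove that $O_{Q}(\mathrm{I}),O_{Q}(\mathrm{II}),O_{Q}(\mathrm{IV})\lesssim\|b\|_{BMO}\|f\|_{L^{\infty}}$ with constants depending only on $n$ and $T$, whereas $O_{Q}(\mathrm{III})$ coincides, up to an error of the same size, with the quantity $\big(\fint_{Q}|b-b_{Q}|\big)|T(f\chi_{(2Q)^{c}})(u)|$ of condition (2) (the absolute value there is immaterial, by replacing $f$ with $-f$). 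Granting this, both (1) and (2) become equivalent to the finiteness of the supremum of $\big(\fint_{Q}|b-b_{Q}|\big)|T(f\chi_{(2Q)^{c}})(u)|$ over cubes $Q$, points $u\in Q$ and $f$ with $\|f\|_{L^{\infty}}\le1$.

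For the terms $\mathrm{I}$ and $\mathrm{II}$: $f_{1}$ is bounded and supported in $2Q$, so the $L^{2}$-boundedness of $T$ and the John-Nirenberg bound $\big(\fint_{2Q}|b-b_{Q}|^{2}\big)^{1/2}\lesssim\|b\|_{BMO}$ give, by H\"older's inequality, $\fint_{Q}|(b-b_{Q})Tf_{1}|\lesssim\|b\|_{BMO}\|f\|_{L^{\infty}}$ and $\big(\fint_{Q}|T((b-b_{Q})f_{1})|^{2}\big)^{1/2}\lesssim\|b\|_{BMO}\|f\|_{L^{\infty}}$, hence $O_{Q}(\mathrm{I}),O_{Q}(\mathrm{II})\lesssim\|b\|_{BMO}\|f\|_{L^{\infty}}$. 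For $\mathrm{IV}$ one invokes the smoothness of the kernel $K$ of $T$: for $x,x'\in Q$ and $y\in(2Q)^{c}$ one has $|K(x,y)-K(x',y)|\lesssim\ell(Q)^{\delta}|x_{Q}-y|^{-n-\delta}$ for the smoothness exponent $\delta>0$, so splitting $(2Q)^{c}$ into the annuli $2^{k+1}Q\setminus2^{k}Q$ and using $\int_{2^{k+1}Q}|b-b_{Q}|\lesssim(1+k)\|b\|_{BMO}|2^{k+1}Q|$,
\[
|\mathrm{IV}(x)-\mathrm{IV}(x')|\lesssim\|f\|_{L^{\infty}}\sum_{k\ge1}\frac{\ell(Q)^{\delta}}{(2^{k}\ell(Q))^{n+\delta}}\int_{2^{k+1}Q}|b-b_{Q}|\lesssim\|b\|_{BMO}\|f\|_{L^{\infty}}\sum_{k\ge1}2^{-k\delta}(1+k),
\]
a convergent series; hence even $\sup_{x,x'\in Q}|\mathrm{IV}(x)-\mathrm{IV}(x')|$, and a fortiori $O_{Q}(\mathrm{IV})$, is $\lesssim\|b\|_{BMO}\|f\|_{L^{\infty}}$.

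The heart of the argument is term $\mathrm{III}$. Because $f_{2}$ is supported in $(2Q)^{c}$, the same annular estimate applied directly to $Tf_{2}$ shows that $x\mapsto Tf_{2}(x)$ oscillates on $Q$ by at most $C_{n,T}\|f\|_{L^{\infty}}$; hence, for an arbitrary fixed $u\in Q$, we may write $Tf_{2}(x)=Tf_{2}(u)+e_{u}(x)$ with $\|e_{u}\|_{L^{\infty}(Q)}\le C_{n,T}\|f\|_{L^{\infty}}$. Then $\mathrm{III}(x)=Tf_{2}(u)\,(b(x)-b_{Q})+(b(x)-b_{Q})e_{u}(x)$: the last summand has $O_{Q}\le2\|e_{u}\|_{L^{\infty}(Q)}\fint_{Q}|b-b_{Q}|\lesssim\|b\|_{BMO}\|f\|_{L^{\infty}}$, while the first is a constant multiple of $b-b_{Q}$, whose average over $Q$ vanishes, so its $O_{Q}$ is exactly $|Tf_{2}(u)|\fint_{Q}|b-b_{Q}|=\big(\fint_{Q}|b-b_{Q}|\big)|T(f\chi_{(2Q)^{c}})(u)|$. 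Assembling the four contributions via subadditivity yields, for every cube (or ball) $Q$ and every $u\in Q$,
\[
\Big|\,O_{Q}([b,T]f)-\Big(\fint_{Q}|b-b_{Q}|\Big)|T(f\chi_{(2Q)^{c}})(u)|\,\Big|\le C_{n,T}\|b\|_{BMO}\|f\|_{L^{\infty}},
\]
and both implications follow upon taking suprema. The only genuine obstacle I anticipate is bookkeeping: one must first check that $T(bf_{1})$, $bTf$ and $T(f\chi_{(2Q)^{c}})(u)$ are all well defined for $f\in L_{c}^{\infty}$ and $b\in BMO$ (using that $b\in L^{q}_{\mathrm{loc}}$ for every $q<\infty$, and that $u$ sits at distance comparable to $\ell(Q)$ from the support of $f\chi_{(2Q)^{c}}$), and then make sure every implicit constant above is uniform in $Q$, $f$ and $u$ — which is precisely what makes the two suprema finite simultaneously.
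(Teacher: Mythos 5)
Your argument is correct. The paper itself does not prove this statement (it is quoted from \cite{HST}), but your scheme is essentially the same one the paper uses for its own Theorem \ref{thm:OscLinear}: subtract $b_{Q}$, split $f=f\chi_{2Q}+f\chi_{(2Q)^{c}}$, control the local terms by John--Nirenberg together with the $L^{2}$-boundedness of $T$, control the far terms by the kernel smoothness summed over annuli, and isolate the term $Tf_{2}(u)\,(b-b_{Q})$, whose mean oscillation over $Q$ is exactly $\big(\fint_{Q}|b-b_{Q}|\big)|T(f\chi_{(2Q)^{c}})(u)|$; since the leftover error is $C_{n,T}\|b\|_{BMO}\|f\|_{L^{\infty}}$, which is an admissible constant in both (1) and (2), the two-sided comparison yields both implications. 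Two small points you should make explicit: (i) with the Dini-type smoothness used in this paper, replace $\ell(Q)^{\delta}|x_{Q}-y|^{-n-\delta}$ on the $k$-th annulus by $\omega(2^{-k})(2^{k}\ell(Q))^{-n}$, and the series $\sum_{k}(1+k)\,\omega(2^{-k})$ converges precisely because of the log-Dini hypothesis (with H\"older smoothness your $\sum_k 2^{-k\delta}(1+k)$ is of course fine); (ii) for $x,u\in Q$ and $y\in 2^{k+1}Q\setminus 2^{k}Q$ with $k$ small, the hypothesis $|x-y|\ge 2|x-u|$ of the smoothness condition can fail when $n\ge 2$, so on those finitely many annuli you should invoke the size condition instead (or replace $2Q$ by $c_{n}Q$ throughout, which only changes the constants); the same remark applies to your bound for $e_{u}$. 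These are routine fixes and do not affect the equivalence.
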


Also in \cite{HST} the authors point out that if $T$ is the Hilbert
transform and any of the conditions in the preceding Theorem are satisfied,
then necessarily $b$ is constant and hence $[b,H]=0$. This fact leads us to think about the possibility
of considering a ``smaller'' oscillation in the left hand side of
(\ref{eq:osc=00005Bb,T=00005DHMS}).

Aiming for a dual of the Hardy spaces for commutators studied by Pérez
\cite{P} and Ky \cite{Ky}, Accomazzo introduced in \cite{A} the
spaces $BMO_{b}^{q}$ which are defined as follows. Given a function
$b$, and $q\in[1,\infty)$ we have that $f\in BMO_{b}^{q}$ if 
\[
\|f\|_{BMO_{b}^{q}}:=\sup_{B}\left(\inf_{c_{0},c_{1}\in\mathbb{R}}\fint_{B}|f(x)-c_{0}+c_{1}b(x)|^{q}\right)^{\frac{1}{q}}.
\]
Note that $\|f\|_{BMO_{b}^{q}}=0$ if and only if $f=\alpha+\beta b$
and hence in order to consider $\|f\|_{BMO_{b}^{q}}$ as a norm one
needs to take quotient by the subspace $\langle1,b\rangle$ (the space of linear combinations of $1$ and $b$). It readily
follows from the definition that $BMO\subset BMO_{b}^{q}$ for every
$q$. It is also easy to show that if $b\in BMO$ then $b^{2}\in BMO_{b}^{q}.$
And for instance choosing $b(x)=\log(x)$, we have that $\log(x)^{2}\in BMO_{b}^{q}\setminus BMO$. 

Inspired by the definition of $BMO_{b}^{q}$ we provide the following
result for commutators.
\begin{thm}
\label{thm:OscLinear}Let $b\in BMO$ and $T$ a Calderón-Zygmund
operator satisfying a$\log$-Dini regularity condition. Then
for every ball $B$ be a ball, if $\delta\in(0,1)$ and $r>1$ we
have that
\[
\inf_{c_{1}\in\mathbb{R}}\left( \fint_{B}|[b,T]f(x)-c_{1}-T(f\chi_{(2B)^{c}})(c_{B})b(x)|^{\delta}dx\right)^{\frac{1}{\delta}}\lesssim r'\left\Vert \frac{f}{w}\right\Vert _{L^{\infty}}\|b\|_{BMO}\inf_{z\in B}M_{r}w(z).
\]
Consequently
\[
\inf_{c_{1},c_{2}\in\mathbb{R}}\left( \fint_{B}|[b,T]f(x)-c_{1}-c_{2}b(x)|^{\delta}dx\right)^{\frac{1}{\delta}}\lesssim r'\left\Vert \frac{f}{w}\right\Vert _{L^{\infty}}\|b\|_{BMO}\inf_{z\in B}M_{r}w(z).
\]
 
\end{thm}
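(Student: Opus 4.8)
The second inequality follows at once from the first: taking $c_{2}=T(f\chi_{(2B)^{c}})(c_{B})$ (where $c_{B}$ denotes the center of $B$) reduces the infimum over $c_{1},c_{2}$ to the infimum over $c_{1}$ in the first display. We work with $f\in L^{\infty}_{c}(\mathbb{R}^{n})$, so that all the constants and principal values below are finite. To prove the first inequality, fix $B=B(c_{B},r_{B})$, set $f_{1}=f\chi_{2B}$, $f_{2}=f\chi_{(2B)^{c}}$, and use the identity
\[
[b,T]f=(b-b_{B})Tf_{1}+(b-b_{B})Tf_{2}-T\big((b-b_{B})f_{1}\big)-T\big((b-b_{B})f_{2}\big).
\]
Since $T(f_{2})(c_{B})\,b(x)=T(f_{2})(c_{B})(b(x)-b_{B})+\mathrm{const}$ and $T((b-b_{B})f_{2})(x)$ differs from $T((b-b_{B})f_{2})(x)-T((b-b_{B})f_{2})(c_{B})$ by a constant, there is a constant $c_{1}$ with
\[
[b,T]f(x)-c_{1}-T(f_{2})(c_{B})b(x)=A_{1}(x)+A_{2}(x)+A_{3}(x)+A_{4}(x),
\]
where $A_{1}=(b-b_{B})Tf_{1}$, $A_{2}(x)=(b(x)-b_{B})(Tf_{2}(x)-Tf_{2}(c_{B}))$, $A_{3}=-T((b-b_{B})f_{1})$ and $A_{4}(x)=-(T((b-b_{B})f_{2})(x)-T((b-b_{B})f_{2})(c_{B}))$. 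As $t\mapsto t^{\delta}$ is subadditive for $0<\delta\le1$, it suffices to bound $(\fint_{B}|A_{i}|^{\delta})^{1/\delta}$ for $i=1,2,3,4$ by the right-hand side of the claimed estimate.

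For $A_{1}$ and $A_{3}$ the plan is to exploit the weak type $(1,1)$ of $T$ through Kolmogorov's inequality. For $A_{3}$ this gives $(\fint_{B}|A_{3}|^{\delta})^{1/\delta}\lesssim_{\delta}|B|^{-1}\int_{2B}|b-b_{B}|\,|f|\le\|f/w\|_{L^{\infty}}|B|^{-1}\int_{2B}|b-b_{B}|\,w$; then Hölder with exponents $r',r$, the John--Nirenberg estimate $(\fint_{2B}|b-b_{B}|^{r'})^{1/r'}\lesssim_{n}r'\|b\|_{BMO}$, and $(\fint_{2B}w^{r})^{1/r}\le\inf_{z\in B}M_{r}w(z)$ give the bound $\lesssim_{n}r'\|b\|_{BMO}\|f/w\|_{L^{\infty}}\inf_{z\in B}M_{r}w(z)$. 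For $A_{1}$ one first applies Hölder inside the average to split $|b-b_{B}|^{\delta}$ from $|Tf_{1}|^{\delta}$, controls the first factor by John--Nirenberg and the second, again by Kolmogorov, by $|B|^{-1}\int_{2B}|f|\lesssim_{n}\|f/w\|_{L^{\infty}}(\fint_{2B}w^{r})^{1/r}\le\|f/w\|_{L^{\infty}}\inf_{z\in B}M_{r}w(z)$.

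The pieces $A_{2}$ and $A_{4}$ are estimated pointwise on $B$ using the kernel regularity, and this is where the $\log$-Dini hypothesis is used. With $\omega$ the modulus of continuity of $K$ and $(2B)^{c}=\bigcup_{k\ge1}(2^{k+1}B\setminus2^{k}B)$, for $x\in B$ one has $|K(x,y)-K(c_{B},y)|\lesssim_{n}\omega(2^{-k})/|2^{k}B|$ on the $k$-th annulus, hence
\[
|Tf_{2}(x)-Tf_{2}(c_{B})|\lesssim_{n}\sum_{k\ge1}\frac{\omega(2^{-k})}{|2^{k}B|}\int_{2^{k+1}B}|f|\lesssim_{n}\|f/w\|_{L^{\infty}}\inf_{z\in B}M_{r}w(z)\sum_{k\ge1}\omega(2^{-k}),
\]
again because $|2^{k}B|^{-1}\int_{2^{k+1}B}w\lesssim_{n}(\fint_{2^{k+1}B}w^{r})^{1/r}\le\inf_{z\in B}M_{r}w(z)$; multiplying by $|b(x)-b_{B}|$ and using John--Nirenberg gives the bound for $A_{2}$. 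For $A_{4}$ the same splitting appears with $\int_{2^{k+1}B}|b-b_{B}|\,|f|$ instead; writing $b-b_{B}=(b-b_{2^{k+1}B})+(b_{2^{k+1}B}-b_{B})$, using $|b_{2^{k+1}B}-b_{B}|\lesssim_{n}(k+1)\|b\|_{BMO}$ and Hölder with exponents $r',r$ as above, one gets $\int_{2^{k+1}B}|b-b_{B}|\,|f|\lesssim_{n}(r'+k)|2^{k+1}B|\,\|b\|_{BMO}\|f/w\|_{L^{\infty}}\inf_{z\in B}M_{r}w(z)$, whence
\begin{align*}
\|A_{4}\|_{L^{\infty}(B)}&\lesssim_{n}\|b\|_{BMO}\|f/w\|_{L^{\infty}}\inf_{z\in B}M_{r}w(z)\sum_{k\ge1}(r'+k)\omega(2^{-k})\\
&\lesssim_{n}r'\|b\|_{BMO}\|f/w\|_{L^{\infty}}\inf_{z\in B}M_{r}w(z),
\end{align*}
the last step because $\sum_{k}\omega(2^{-k})<\infty$ and $\sum_{k}k\,\omega(2^{-k})<\infty$, the latter being precisely the $\log$-Dini condition. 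Combining the estimates for $A_{1},\dots,A_{4}$ proves the first inequality, and the theorem follows. The main new point, compared with the non-commutator estimate of \cite{CPR}, is the factor $k$ multiplying $\omega(2^{-k})$ in the tail of $A_{4}$, forced by the growth $|b_{2^{k+1}B}-b_{B}|\sim k\|b\|_{BMO}$: ordinary Dini regularity does not suffice here, and keeping careful track of the John--Nirenberg constant $r'$ throughout is the other delicate point.
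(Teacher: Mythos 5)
Your proof is correct and follows essentially the same route as the paper: the same choice of constants $c_{1},c_{2}$ (with centering at $b_{B}$ rather than the paper's $b_{2B}$, an immaterial difference), the same four-piece decomposition into local terms handled by Kolmogorov plus John--Nirenberg with exponent $r'$, and tail terms handled by the kernel smoothness, with the $\log$-Dini condition absorbing the factor $k$ coming from $|b_{2^{k+1}B}-b_{B}|\lesssim k\|b\|_{BMO}$.
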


The next natural question would be whether it is possible to extrapolate
from the condition above. We show that that is the case under some
additional conditions.
\begin{thm}
\label{thm:ExtrapLinear}Let $T$ be a linear operator such that for
every $b\in BMO$ and every $w\in A_{1}$ 
\[
\inf_{c_{1}\in\mathbb{R}}\left( \fint_{B}|[b,T]f(x)-c_{1}-T(f\chi_{(2B)^{c}})(c_{B})b(x)|^{\delta}dx\right)^{\frac{1}{\delta}}\leq c_{w,T}\left\Vert \frac{f}{w}\right\Vert _{L^{\infty}}\|b\|_{BMO}\inf_{z\in B}w(z)
\]
and such that Lerner's grand maximal operator
\[
\mathcal{M}_{T}f(x)=\sup_{x\in B}\esssup_{z\in B}T(f\chi_{(2B)^{c}})(z)
\]
is bounded on $L^{p}(v)$ for some $p\in(1,\infty)$ and some $v\in A_{p}$. Then 
\[
\left\Vert [b,T]f\right\Vert _{L^{p}(v)}\leq c_{v,T}\|b\|_{BMO}\|f\|_{L^{p}(v)}.
\]
\end{thm}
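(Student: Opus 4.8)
The plan is to extract a pointwise sparse estimate for $[b,T]f$ from the hypothesis by means of Lerner's local mean oscillation formula, and then to fix the auxiliary $A_{1}$ weight via a Rubio de Francia iteration. By density it is enough to prove the bound for $f\in L_{c}^{\infty}(\mathbb{R}^{n})$ (recall that $L_{c}^{\infty}$ is dense in $L^{p}(v)$ since $v\in A_{p}\subset L^{1}_{\mathrm{loc}}$). Fix a cube $Q_{0}$. Lerner's inequality furnishes a $\tfrac12$-sparse family $\mathcal{S}=\mathcal{S}(Q_{0})$ of dyadic subcubes of $Q_{0}$ with
\[
|[b,T]f(x)-m_{[b,T]f}(Q_{0})|\lesssim_{n}\sum_{Q\in\mathcal{S}}\omega_{\lambda_{n}}([b,T]f;Q)\,\chi_{Q}(x)\qquad\text{for a.e. }x\in Q_{0},
\]
where $m$ denotes a median, $\lambda_{n}=2^{-n-2}$, and $\omega_{\lambda}(g;Q)=\inf_{c\in\mathbb{R}}\big((g-c)\chi_{Q}\big)^{*}(\lambda|Q|)$.

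Next I would estimate each local oscillation. By Chebyshev, $\omega_{\lambda}(g;Q)\le\lambda^{-1/\delta}\inf_{c}\big(\fint_{Q}|g-c|^{\delta}\big)^{1/\delta}$. Picking $c=c_{1}^{*}+T(f\chi_{(2Q)^{c}})(c_{Q})\,b_{Q}$, with $c_{1}^{*}$ almost realising the infimum in the hypothesis for the cube $Q$ and for a weight $w\in A_{1}$ still to be chosen, and using $(a+b)^{\delta}\le a^{\delta}+b^{\delta}$ together with the quasi-triangle inequality for the $L^{\delta}$ quasi-norm, the hypothesis, and the John--Nirenberg bound $\big(\fint_{Q}|b-b_{Q}|^{\delta}\big)^{1/\delta}\lesssim\|b\|_{BMO}$, one arrives at
\[
\omega_{\lambda_{n}}([b,T]f;Q)\lesssim_{n,\delta}\|b\|_{BMO}\Big(c_{w,T}\Big\|\frac{f}{w}\Big\|_{L^{\infty}}\inf_{Q}w+\inf_{x\in Q}\mathcal{M}_{T}f(x)\Big),
\]
where the last term accounts for $|T(f\chi_{(2Q)^{c}})(c_{Q})|\le\mathcal{M}_{T}f(x)$ for every $x\in Q$. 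This holds for every $w\in A_{1}$.

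Inserting this into the sparse sum splits the estimate into two pieces. The ``grand maximal'' piece is controlled by $\|b\|_{BMO}\,\mathcal{A}_{\mathcal{S}}(\mathcal{M}_{T}f)$, where $\mathcal{A}_{\mathcal{S}}g=\sum_{Q\in\mathcal{S}}\big(\fint_{Q}g\big)\chi_{Q}$; since $v\in A_{p}$ the sparse operator $\mathcal{A}_{\mathcal{S}}$ is bounded on $L^{p}(v)$, and by hypothesis $\|\mathcal{M}_{T}f\|_{L^{p}(v)}\lesssim_{v,T}\|f\|_{L^{p}(v)}$, so this piece is $\lesssim_{v,T}\|b\|_{BMO}\|f\|_{L^{p}(v)}$. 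For the ``oscillation'' piece one must estimate $\big\|\sum_{Q\in\mathcal{S}}(\inf_{Q}w)\chi_{Q}\big\|_{L^{p}(v)}$; by duality against $0\le h$ with $\|h\|_{L^{p'}(v)}\le1$, using $(\inf_{Q}w)|Q|\le w(Q)$, the sparseness of $\mathcal{S}$ together with $w\in A_{1}$ to pass from $w(Q)$ to $w(E_{Q})$ on disjoint subsets, and the boundedness of $M$ on $L^{p'}(\sigma)$ with $\sigma=v^{1-p'}$, one gets $\big\|\sum_{Q\in\mathcal{S}}(\inf_{Q}w)\chi_{Q}\big\|_{L^{p}(v)}\lesssim[w]_{A_{1}}\|M\|_{L^{p'}(\sigma)}\|w\|_{L^{p}(v)}$. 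Now comes the decisive choice: let $w$ be the Rubio de Francia iterate of $|f|$ relative to $L^{p}(v)$, namely $w=\sum_{k\ge0}(2\|M\|_{L^{p}(v)})^{-k}M^{k}f$. Then $|f|\le w$ (so $\|f/w\|_{L^{\infty}}\le1$), $\|w\|_{L^{p}(v)}\le2\|f\|_{L^{p}(v)}$, and $[w]_{A_{1}}\le2\|M\|_{L^{p}(v)}$, so that $c_{w,T}$ depends only on $v$ and $T$; consequently the ``oscillation'' piece is also $\lesssim_{v,T}\|b\|_{BMO}\|f\|_{L^{p}(v)}$.

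Combining both pieces gives $\big\|\big([b,T]f-m_{[b,T]f}(Q_{0})\big)\chi_{Q_{0}}\big\|_{L^{p}(v)}\lesssim_{v,T}\|b\|_{BMO}\|f\|_{L^{p}(v)}$, uniformly in $Q_{0}$; letting $Q_{0}\uparrow\mathbb{R}^{n}$ and using the standard fact that $m_{[b,T]f}(Q_{0})\to0$ (which holds since $[b,T]f$ vanishes at infinity for $f\in L_{c}^{\infty}$) together with Fatou's lemma settles the case $f\in L_{c}^{\infty}$, and density concludes the proof. The main obstacle is precisely the third step: one needs a single $A_{1}$ weight that simultaneously renders $\|f/w\|_{L^{\infty}}$ finite, keeps $[w]_{A_{1}}$ — hence $c_{w,T}$ — under control, and keeps $\|w\|_{L^{p}(v)}$ comparable to $\|f\|_{L^{p}(v)}$; the Rubio de Francia algorithm delivers exactly such a weight, and it is crucial here that the hypothesis features $\inf_{Q}w$ rather than $\fint_{Q}w$, which is what makes the duality/maximal-function estimate close.
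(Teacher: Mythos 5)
Your argument is correct in substance, but it follows a genuinely different route from the paper. The paper stays at the level of the sharp maximal function: choosing $c=c_{1}+Tf_{2}(c_{B})b_{2B}$ it derives the pointwise bound $M_{\sharp,\delta}([b,T]f)(x)\lesssim g(x)+\|b\|_{BMO}\mathcal{M}_{T}f(x)$, where $g$ is the supremum over balls of the oscillation in the hypothesis; since the hypothesis says precisely that $\|g/w\|_{L^{\infty}}\le c_{w}\|b\|_{BMO}\|f/w\|_{L^{\infty}}$ for all $w\in A_{1}$, the pair $(f,g)$ falls under Theorem \ref{thm:ExtrapInfty} (with the weight $w^{-1}$), and the proof is finished by the assumed $L^{p}(v)$ bound for $\mathcal{M}_{T}$ together with the Fefferman--Stein inequality $\|h\|_{L^{p}(v)}\lesssim\|M_{\sharp,\delta}h\|_{L^{p}(v)}$. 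You instead unwind the extrapolation by hand: Lerner's local mean oscillation formula gives a sparse pointwise domination, the hypothesis plus John--Nirenberg control each local oscillation by $\|b\|_{BMO}\big(c_{w,T}\|f/w\|_{L^{\infty}}\inf_{Q}w+\inf_{Q}\mathcal{M}_{T}f\big)$, the $\mathcal{M}_{T}$ piece is absorbed by the $L^{p}(v)$-boundedness of sparse operators, the $\inf_{Q}w$ piece is handled by duality, sparseness, $Mw\le[w]_{A_{1}}w$ and $\|M\|_{L^{p'}(\sigma)}<\infty$ with $\sigma=v^{1-p'}$, and the Rubio de Francia iterate of $|f|$ is the right choice of $w$. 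This is essentially the quantitative scheme of \cite{CPR} adapted to the commutator oscillation; it buys explicit constants of the form $\varphi(\|M\|_{L^{p}(v)})\|M\|_{L^{p'}(\sigma)}$, whereas the paper's proof is shorter because it invokes Theorem \ref{thm:ExtrapInfty} and Fefferman--Stein as black boxes. Three caveats, none fatal: (i) the hypothesis and $\mathcal{M}_{T}$ are formulated with balls while Lerner's formula produces dyadic cubes, so before applying the hypothesis each $Q\in\mathcal{S}$ should be replaced by a circumscribed ball $B_{Q}$ (using $\fint_{B_{Q}}\gtrsim_{n}\fint_{Q}$, $\inf_{B_{Q}}w\le\inf_{Q}w$, and $|T(f\chi_{(2B_{Q})^{c}})(c_{B_{Q}})|\le\mathcal{M}_{T}f(x)$ for $x\in Q$); (ii) your claim that $m_{[b,T]f}(Q_{0})\to0$ because ``$[b,T]f$ vanishes at infinity'' is not justified for an abstract linear $T$ --- some qualitative a priori information on $[b,T]f$ is needed there, but this is exactly the same unstated a priori hypothesis the paper needs in order to apply Fefferman--Stein, so you are no worse off than the original argument; (iii) both proofs implicitly assume that $c_{w,T}$ depends on $w$ only through $[w]_{A_{1}}$ (you need it so that the Rubio de Francia weight yields a constant depending only on $v$ and $T$; the paper needs it to be within the scope of Theorem \ref{thm:ExtrapInfty}).
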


Observe that the operator $\mathcal{M}_{T}$ was introduced in \cite{Le}
in order to study sparse domination. There it was shown that in the
case of $T$ being a Calderón-Zygmund operator
\[
\mathcal{M}_{T}f(x)\lesssim Mf(x)+T^{*}f(x),
\]
where $T^{*}$ stands for the maximal Calderón-Zygmund operator. Since
both $M$ and $T^{*}$ are bounded on $L^{p}(w)$ for $w\in A_{p}$,
the result above combined with the estimate in Theorem \ref{eq:OscT}
allows to provide an alternative proof of the weighted $L^{p}$ boundedness
of the commutator $[b,T]$. 

Here we just presented the results in the linear setting. However
results in the multilinear setting are feasible as well and will be
obtained in Section \ref{sec:MlC}.

The remainder of the paper is organized as follows. In Section \ref{sec:Preliminaries}
we gather some preliminaries.

In Section \ref{sec:PM} we settle Theorems \ref{thm:OscLinear} and
\ref{thm:ExtrapLinear}. Finally in Section \ref{sec:MlC} we present
and settle the multilinear counterparts of the main results.

\section{Preliminaries\label{sec:Preliminaries}}

We recall that $T$ is a Calderón-Zygmund operator if $T$ is a linear
operator that is bounded on $L^{2}$ and it admits a representation
in terms of a kernel $K$ 
\[
Tf(x)=\int_{\mathbb{R}^{n}}K(x,y)f(y)dy\qquad x\not\in\supp f
\]
where $K$ satisfies the following properties.
\begin{enumerate}
\item[] Size~condition: $|K(x,y)|\leq C_{K} |x-y|^{-n}$;\\
\item[] Smoothness~condition: Provided that $|x-y|\geq2|x-z|$, 
\[
|K(x,y)-K(z,y)|+|K(y,x)-K(y,z)|\leq\omega\left(\frac{|x-z|}{|x-y|}\right)\frac{1}{|x-y|^n},
\]
where $\omega$ is a continuous subadditive function such that $$\int_{0}^{1}\omega(t)\log\left(\frac{1}{t}\right)\frac{dt}{t}<\infty.$$ 
\end{enumerate}

In the definition of commutators we used $BMO$ functions. We recall
that $b\in BMO$ if 
\[
\|b\|_{BMO}=\sup_{B} \fint_{B}|b-b_{B}|<\infty.
\]
A fundamental property of this space of functions is the well known
John-Nirenberg that says that the integrability of the oscillations
self-improves to exponential integrability, namely, there exist constants
$\lambda,c>0$ such that for every ball $B$ and every $BMO$ function
\[
|\{x\in B: |b-b_B|> \lambda\}|\lesssim e^{-c \lambda/{\|b\|_{BMO}}} |B|.
\]
Note that this in turn implies that 
\begin{equation}\label{eq:jn}
\fint_B |b-b_B|^\alpha \lesssim \max\{\alpha,1\} \|b\|_{BMO}
\end{equation}
for every $\alpha>0$. Another fact that we will use in the sequel
is that if $B$ is a ball then 
\begin{equation}
|b_{2^{j}B}-b_{B}|\lesssim j\|b\|_{BMO}.\label{eq:prop2jBBbmo}
\end{equation}
We remit the interested reader to \cite{J} for more details on $BMO$.

Quite related to the definition of $BMO$ is that of the sharp maximal
function. Given $\delta>0$, we define 
\[
M_{\sharp,\delta}(f)(x)=\sup_{x\in B}\inf_{c\in\mathbb{R}}\left( \fint_{B}|f-c|^{\delta}\right)^{\frac{1}{\delta}}.
\]

We would like to end this preliminaries section by gathering some basic
facts about multilinear theory. We recall that a linear operator $T$
is an $m$-linear Calderón-Zygmund operator if $T:L^{p_{1}}\times\dots\times L^{p_{m}}\rightarrow L^{p}$
for some $1<p_{1},\dots,p_{m}<\infty$ with $\frac{1}{p}=\sum_{i=1}^{m}\frac{1}{p_{i}}$
and it admits the following representation 
\[
T(\vec{f})(x)=\int_{\mathbb{R}^{nm}}K(x,y_{1},\dots,y_{m})f(y_{1})\dots f(y_{m})dy_{1}\dots dy_{m}
\]
where $x\not\in\supp(f_{i})$ for any $i\in\{1,\dots,m\}$, in terms
of a kernel $K$ that satisfies the following properties.
\begin{enumerate}
\item [] Size~condition: $|K(x,\vec{y})|\leq C_{K} (\sum_{i=1}^{m}|x-y_{i}|)^{-mn}$;
\item [] Smoothness~condition: Given $\omega$ a continuous subadditive
function such that $\int_{0}^{1}\omega(t)\log\left(\frac{1}{t}\right)\frac{dt}{t}<\infty$,
the following conditions hold
\[
|K(x,\vec{y})-K(z,\vec{y})|\leq\omega\left(\frac{|x-z|}{\max_{i\in\{1,\dots,m\}}|x-y_{i}|}\right)\frac{1}{\left(\sum_{i=1}^{m}|x-y_{i}|\right)^{mn}}
\]
provided that $\max_{i\in\{1,\dots,m\}}|x-y_{i}|\geq2|x-z|$, and
also, for any $j\in\{1,\dots,m\}$
\begin{align*}
  |K(x,y_{1},\dots,y_{j},\dots, y_{m})&-K(x,y_{1},\dots,y_{j}',\dots, y_{m})|\\
 & \leq\omega\left(\frac{|y_{j}-y_{j}'|}{\max_{i\in\{1,\dots,m\}}|x-y_{i}|}\right)\frac{1}{\left(\sum_{i=1}^{m}|x-y_{i}|\right)^{mn}}
\end{align*}
where $\max_{i\in\{1,\dots,m\}}|x-y_{i}|\geq2|y_{j}-y'_{j}|$.
\end{enumerate}
Note that in this context the commutator $[b,T]_{j}\vec{f}(x)$ is
defined as 
\[
[b,T]_{j}\vec{f}(x)=b(x)T(\vec{f})(x)-T(f_{1},\dots,f_{j}b,\dots,f_{m}).
\]
Note that the definition is essentially equivalent whichever index
we commute in. Hence throughout the remainder of this work we will
consider just the case $[b,T]_{1}$. 

Let us also recall that we say $\vec w=(w_1,\dots, w_m)\in A_{\vec p}$, if 
\[
\sup_Q \Big(\fint_Q w^p\Big)^{\frac 1p}\prod_{i=1}^m\Big(\fint_Q w_i^{-p_i'}\Big)^{\frac 1{p_i'}}<\infty,\qquad w:=\prod_{i=1}^m w_i
\]
 where $\vec p=(p_1, \dots, p_m)$ with $1\le p_i\le \infty$ and $1/p=1/{p_1}+\dots+1/{p_m}$. A consequence of the multilinear extrapolation result that appeared first in \cite[Theorem 4.12]{N} (see as well  \cite{LMMOV}) states that 
 \begin{thm}
 Let $(f, f_1,\dots, f_m)$ be an $(m+1)$-tuple of functions. Suppose
that 
\[
\|fw\|_{L^{\infty}}\le c_{\vec w}\prod_{i=1}^m\|f_iw_i\|_{L^{\infty}}
\]
holds for all $\vec w$ with $\vec w\in A_{(\infty,\dots, \infty)}$, where $c_{\vec w}$ depends
only on $[\vec w]_{A_{(\infty,\dots, \infty)}}$. Then for all $\vec p$ with $p_i>1$, $i=1,\dots, m$, and all $\vec w\in A_{\vec p}$,
we have 
\[
\|fw\|_{L^{p}}\le c_{\vec w}\prod_{i=1}^m\|f_iw_i\|_{L^{p_i}},
\]
where $\tilde{c}_{\vec w}$ depends only on $[\vec w]_{A_{\vec p}}$.
 \end{thm}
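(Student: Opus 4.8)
The plan is to deduce the theorem from the weighted boundedness of the Hardy--Littlewood maximal operator by running the multilinear Rubio de Francia machinery, as in \cite{N} (see also \cite{LMMOV}); I outline the main line and indicate where the real work lies. Taking absolute values we may assume $f,f_1,\dots,f_m\ge0$. Next I would dispose of the case $p<1$ by the rescaling $(f,f_1,\dots,f_m)\mapsto(f^{s},f_1^{s},\dots,f_m^{s})$, $(w,w_1,\dots,w_m)\mapsto(w^{s},w_1^{s},\dots,w_m^{s})$: one has $w^{s}=\prod_i w_i^{s}$, $\|fw\|_{L^p}=\|f^{s}w^{s}\|_{L^{p/s}}^{1/s}$ (and likewise factor by factor), and $\vec w\in A_{\vec p}$ passes to an $A_{\vec q}$ condition with $q_i=p_i/s>1$ and characteristic constant controlled by $[\vec w]_{A_{\vec p}}$; choosing $s=p$ brings the target exponent to $p/s=1$. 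So we may assume $p\ge1$ from now on.

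Fix $\vec w\in A_{\vec p}$ with $p\ge1$, set $w=\prod_i w_i$ and $\sigma_i=w_i^{-p_i'}$, and recall the single--weight description of the class: $w^{p}\in A_{mp}$ and $\sigma_i\in A_{mp_i'}$ for each $i$, with $A_\infty$ and $A_p$ constants, and correspondingly the relevant operator norms of $M$, all controlled by $[\vec w]_{A_{\vec p}}$. By $L^{p}$--$L^{p'}$ duality pick $h\ge0$ with $\|h\|_{L^{p'}}\le1$ and $\|fw\|_{L^p}=\int fw\,h\,dx$. For weights $W_1,\dots,W_m$ still to be chosen, write $W=\prod_i W_i$ and estimate
\[
\|fw\|_{L^p}=\int (fW)\bigl(wh\,W^{-1}\bigr)\,dx\le\|fW\|_{L^\infty}\int wh\,W^{-1}\,dx .
\]
If $\vec W\in A_{(\infty,\dots,\infty)}$ the hypothesis bounds the right-hand side by $c_{\vec W}\prod_i\|f_iW_i\|_{L^\infty}\int wh\,W^{-1}\,dx$, so it suffices to build the $W_i$ so that simultaneously: (a) $\vec W\in A_{(\infty,\dots,\infty)}$ with $c_{\vec W}$ controlled by $[\vec w]_{A_{\vec p}}$; (b) $\|f_iW_i\|_{L^\infty}\lesssim1$; and (c) $\int wh\,W^{-1}\,dx\lesssim\prod_i\|f_iw_i\|_{L^{p_i}}$.

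Each $W_i$ is to be produced by a Rubio de Francia operator $R_i=\sum_{k\ge0}2^{-k}\|M\|^{-k}M^{k}$ run in a weighted Lebesgue space adapted to the data $w^{p}\in A_{mp}$, $\sigma_i\in A_{mp_i'}$: schematically $W_i$ is $w_i$ divided by a fractional power of $R_i$ applied to the matching power of $f_iw_i$, calibrated so that $W_i^{-1}$ becomes a product of bounded powers of $A_1$ weights — one coming from $R_i$, one built from $\sigma_i$ — whence $W_i^{-1}\in A_1$ with $[W_i^{-1}]_{A_1}$ controlled by $[\vec w]_{A_{\vec p}}$, which is (a). Since $R_ig\ge g$ pointwise, this form of $W_i$ forces $f_iW_i\lesssim1$, which is (b). For (c) one uses $w\prod_i w_i^{-1}=1$ to reduce $\int wh\,W^{-1}\,dx$ to an integral against $h$ of a product of fractional powers of the $R_i$-images, and then Hölder's inequality with exponents $p',p_1,\dots,p_m$ (admissible since $\tfrac1{p'}+\sum_i\tfrac1{p_i}=1$) together with the boundedness of each $R_i$ on its underlying weighted space; the exponents of the fractional powers entering $W_i$ are chosen precisely so that these two ingredients combine to give the bound $\prod_i\|f_iw_i\|_{L^{p_i}}$. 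Combining (a)--(c) with the displayed inequality and the hypothesis yields $\|fw\|_{L^p}\lesssim\prod_i\|f_iw_i\|_{L^{p_i}}$ with implied constant depending on $[\vec w]_{A_{\vec p}}$ alone.

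The hard step is the construction in the previous paragraph: the spaces in which the $R_i$ are run, and the exponents of the fractional powers, must be calibrated so that (a), (b) and (c) hold \emph{at once} and with constants genuinely controlled by $[\vec w]_{A_{\vec p}}$, which forces one to make fully explicit the equivalence between $\vec w\in A_{\vec p}$ and the single--weight conditions on $w^{p}$ and the $\sigma_i$, and to carry the sharp $A_p$ bounds for $M$ through the estimate; this is precisely the technical core worked out in \cite{N, LMMOV}. If that bookkeeping proved unwieldy I would instead fall back on the Grafakos--Martell strategy: freeze all the $f_i$ but one, apply a linear off--diagonal endpoint extrapolation (in the spirit of Theorem \ref{thm:ExtrapInfty}) in the remaining variable, and iterate $m$ times, keeping track of how the weight classes and exponents transform at each stage.
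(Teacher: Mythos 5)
You should first note that the paper itself does not prove this statement: it is quoted verbatim as a consequence of \cite[Theorem 4.12]{N} (see also \cite{LMMOV}), so the only meaningful question is whether your sketch would stand as a self-contained proof, and it does not. The decisive step --- constructing $W_1,\dots,W_m$ so that your conditions (a), (b), (c) hold \emph{simultaneously}, with constants controlled by $[\vec w]_{A_{\vec p}}$ --- is described only ``schematically'', and you yourself defer it to ``the technical core worked out in \cite{N,LMMOV}''. But that construction is precisely the content of the theorem; the outer shell (apply the $L^\infty$ hypothesis to suitably built weights and estimate what is left) is the right family of ideas, yet as written the argument is a pointer to the literature rather than a proof.

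Beyond that, the part of the reduction you do spell out has concrete problems. The rescaling to force $p\ge 1$ is fine at the level of weight classes ($\vec w\in A_{\vec p}$ does give $\vec w^{\,s}\in A_{\vec p/s}$ for $0<s<1$, by Jensen), but the rescaled tuple $(f^{s},f_1^{s},\dots,f_m^{s})$ must also inherit the \emph{hypothesis}: to verify $\|f^{s}W\|_{L^\infty}\le c\prod_i\|f_i^{s}W_i\|_{L^\infty}$ from the assumption you are forced to feed $(W_1^{1/s},\dots,W_m^{1/s})$, with $1/s>1$, into the original $L^\infty$ estimate, and $A_{(\infty,\dots,\infty)}$ is not stable under powers larger than one (already for $m=1$ this is the false assertion that $w^{-1}\in A_1$ implies $w^{-1/s}\in A_1$). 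Moreover the $L^{p}$--$L^{p'}$ duality that made this reduction necessary is avoidable and is not how the quoted result is obtained: one writes $\|fw\|_{L^{p}}\le\|fW\|_{L^\infty}\,\|wW^{-1}\|_{L^{p}}$ and then H\"older with $1/p=\sum_i 1/p_i$ gives $\|wW^{-1}\|_{L^{p}}\le\prod_i\|w_iW_i^{-1}\|_{L^{p_i}}$, valid for every $p$ including $p<1$; the Rubio de Francia construction must then also produce $\|w_iW_i^{-1}\|_{L^{p_i}}\lesssim\|f_iw_i\|_{L^{p_i}}$ in place of your (c), variable by variable. Finally, your fallback of freezing all but one function and iterating a linear endpoint extrapolation in the spirit of Theorem \ref{thm:ExtrapInfty} (the Grafakos--Martell strategy \cite{GM}) only reaches the product classes $w_i\in A_{p_i}$, which are strictly smaller than $A_{\vec p}$; this is exactly the limitation that the genuinely multilinear constructions of \cite{LMMOV,N} were designed to overcome, so that route cannot deliver the statement as claimed.
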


\section{Proofs of the main results\label{sec:PM}}

\subsection{Proof of Theorem \ref{thm:OscLinear}}

Let $B$ be a ball and $c_{2},\lambda$ constants to be chosen. Let
\[
c_{1}=-\lambda Tf_{2}(c_B)-T((b-\lambda)f_{2})(c_{B}),
\]where $f_2=f\chi_{\mathbb R^n\setminus 2B}$. 
Then we begin arguing as follows.
\begin{align*}
  &\Big( \fint_{B}|[b,T]f(x)-c_{1}-c_{2}b(x)|^{\delta}dx\Big)^{\frac{1}{\delta}}\\
 & =\left( \fint_{B}|[b-\lambda,T]f(x)-c_{1}-c_{2}b(x)|^{\delta}dx\right)^{\frac{1}{\delta}}\\
 & \lesssim\left( \fint_{B}|(b(x)-\lambda)Tf(x)+\lambda Tf_{2}(c_B)-c_{2}b(x)|^{\delta}dx\right)^{\frac{1}{\delta}}\\
 & \hspace{4cm}+\left( \fint_{B}|T((b-\lambda)f)(x)-T((b-\lambda)f_{2})(c_{B})|^{\delta}dx\right)^{\frac{1}{\delta}}\\
 & =:L_{1}+L_{2}
\end{align*}
Note that for $L_{1}$, choosing $\lambda=b_{2B}$ we have that for
$\delta<\varepsilon<1$, calling $f_{1}=f\chi_{2B}$,
\begin{align*}
L_{1} & =\left(\fint_{B}|(b(x)-b_{2B})Tf(x)+b_{2B} Tf_{2}(c_B)-c_{2}b(x)|^{\delta}dx\right)^{\frac{1}{\delta}}\\
 & \lesssim\left(\fint_{B}|(b(x)-b_{2B})Tf_{1}(x)|^{\delta}dx\right)^{\frac{1}{\delta}}\\
 &\hspace{3cm}+\left(\fint_{B}|(b(x)-b_{2B})Tf_{2}(x)+b_{2B} Tf_{2}(c_B)-c_{2}b(x)|^{\delta}dx\right)^{\frac{1}{\delta}}\\
 & =:L_{11}+L_{12}.
\end{align*}
First we focus on $L_{11}$. We argue as follows
\begin{align*}
L_{11} & =\left( \fint_{B}|(b(x)-b_{2B})Tf_{1}(x)|^{\delta}dx\right)^{\frac{1}{\delta}}\\
 & \leq\left( \fint_{B}|b(x)-b_{2B}|^{\delta(\frac{\varepsilon}{\delta})'}dx\right)^{\frac{1}{\delta(\frac{\varepsilon}{\delta})'}}\left( \fint_{B}|Tf_{1}|^{\varepsilon}dx\right)^{\frac{1}{\varepsilon}}\\
 & \lesssim\|b\|_{BMO} \fint_{2B}|f|=\|b\|_{BMO} \fint_{2B}\frac{|f|}{w}w\\
 & \leq\|b\|_{BMO}\left\Vert \frac{f}{w}\right\Vert _{L^{\infty}}\inf_{z\in B}Mw.
\end{align*}
Now we turn to $L_{12}$. Choosing $c_2=Tf_{2}(c_B)$ we have that
\begin{align*}
L_{12} & =\left( \fint_{B}|(b(x)-b_{2B})Tf_{2}(x)-Tf_{2}(c_B)b(x)+b_{2B} Tf_{2}(c_B)|^{\delta}dx\right)^{\frac{1}{\delta}}\\
 & =\left( \fint_{B}|(b(x)-b_{2B})Tf_{2}(x)-(b(x)-b_{2B})Tf_{2}(c_B)|^{\delta}dx\right)^{\frac{1}{\delta}}\\
 & \leq\left( \fint_{B}|b(x)-b_{2B}||Tf_{2}(x)-Tf_{2}(c_B)|dx\right).
\end{align*}
From this point taking into account the smoothness condition of the
kernel we may argue as follows
\begin{align*}
 & \left( \fint_{B}|b(x)-b_{2B}||Tf_{2}(x)-Tf_{2}(c_B)|dx\right)\\
 & \leq \fint_{B}|b(x)-b_{2B}|\int_{\mathbb R^n\setminus 2B}|K(x,y)-K(c_B,y)||f(y)|dydx\\
 & \leq \fint_{B}|b(x)-b_{2B}|\int_{\mathbb R^n\setminus 2B }\frac{1}{|x-y|^{n}}\omega\left(\frac{|x-c_B|}{|x-y|}\right)|f(y)|dydx\\
 & \leq \fint_{B}|b(x)-b_{2B}|\sum_{j=1}^{\infty}\int_{2^{j+1}B\setminus2^{j}B}\frac{1}{|x-y|^{n}}\omega\left(\frac{|x-c_B|}{|x-y|}\right)|f(y)|dydx\\
 & \le  \fint_{B}|b(x)-b_{2B}|dx\sum_{j=1}^{\infty}\frac{1}{2^{jn}l(B)^{n}}\omega\left(\frac{l(B)}{2^{j}l(B)}\right)\int_{2^{j+1}B\setminus2^{j}B}|f(y)|dy\\
 & \lesssim \|b\|_{BMO}\sum_{j=1}^{\infty}\omega(2^{-j}) \fint_{2^{j+1}B}|f(y)|dy\\
 & \lesssim\|b\|_{BMO}\left\Vert \frac{f}{w}\right\Vert _{L^{\infty}}\inf_{z\in B}Mw.
\end{align*}
We continue bounding $L_{2}$. Note that
\begin{align*}
L_{2} & \lesssim\left(\fint_{B}|T((b-b_{2B})f_{1})(x)|^{\delta}dx\right)^{\frac{1}{\delta}}\\
 & +\left(\fint_{B}|T((b-b_{2B})f_{2})(x)-T((b-b_{2B})f_{2})(c_{B})|^{\delta}dx\right)^{\frac{1}{\delta}}\\
 & =L_{21}+L_{22}.
\end{align*}
For $L_{21}$ by Kolmogorov inequality,
\begin{align*}
L_{21}  &=\left(\fint_{B}|T((b-b_{2B})f_{1})(x)|^{\delta}dx\right)^{\frac{1}{\delta}}
  \lesssim \fint_{2B}|b-b_{2B}||f|dx\\
 & \leq\left\Vert \frac{f}{w}\right\Vert _{L^{\infty}}\fint_{2B}|b-b_{2B}|wdx \lesssim r'\left\Vert \frac{f}{w}\right\Vert _{L^{\infty}}\|b\|_{BMO}\inf_{z\in B}M_{r}w(z),
\end{align*}where in the last step we have used H\"older's inequality and \eqref{eq:jn}. 
For $L_{22}$, we have that, using the smoothness condition of the
kernel,
\begin{align*}
L_{22} & \leq\left(\fint_{B}|T((b-b_{2B})f_{2})(x)-T((b-b_{2B})f_{2})(c_{B})|dx\right)\\
 & \leq \fint_{B}\int_{\mathbb R^n \setminus 2B}|K(x,y)-K(c_{B},y)||b(y)-b_{2B}||f(y)|dydx\\
 & \leq \fint_{B}\int_{\mathbb R^n\setminus 2B }\omega\left(\frac{|x-c_{B}|}{|x-y|}\right)\frac{1}{|x-y|^{n}}|b(y)-b_{2B}||f(y)|dydx\\
 & \leq\sum_{j=1}^{\infty}\omega(2^{-j})\frac{1}{(2^{j}l(B))^{n}}\int_{2^{j+1}B\setminus2^{j}B}|b(y)-b_{2B}||f(y)|dy\\
 & \lesssim \left\Vert \frac{f}{w}\right\Vert _{L^{\infty}}\sum_{j=1}^{\infty}\omega(2^{-j})\fint_{2^{j+1}B}|b(y)-b_{2B}|w(y)dy\\
 & \leq\left\Vert \frac{f}{w}\right\Vert _{L^{\infty}}\sum_{j=1}^{\infty}\omega(2^{-j}) \fint_{2^{j+1}B}|b(y)-b_{2^{j+1}B}|w(y)dy\\
 & \hspace{3cm}+\left\Vert \frac{f}{w}\right\Vert _{L^{\infty}}\sum_{j=1}^{\infty}\omega(2^{-j})|b_{2^{j+1}B}-b_{B}| \fint_{2^{j+1}B}w(y)dy\\
 & \lesssim r' \left\Vert \frac{f}{w}\right\Vert _{L^{\infty}}\|b\|_{BMO}\essinf_{z\in B}M_{r}w(z),
\end{align*}where in the last step we have used H\"older's inequality, \eqref{eq:jn} and \eqref{eq:prop2jBBbmo}.
This ends the proof.

\subsection{Proof of Theorem \ref{thm:ExtrapLinear}.}

Let us fix a ball $B$ and $x\in B$. Following the same notation as that in the proof of Theorem \ref{thm:OscLinear}, if we choose
$c=c_{1}+Tf_{2}(c_{B})b_{2B}$, 
\begin{align*}
 & \Big(\fint_{B}|[b,T]f(y)-c|^{\delta}dy\Big)^{\frac{1}{\delta}}\\
 & =\Big(\fint_{B}|[b,T]f(y)-c_{1}-Tf_{2}(c_{B})b(y)-Tf_{2}(c_{B})b_{2B}+Tf_{2}(c_{B})b(y)|^{\delta}dy\Big)^{\frac{1}{\delta}}\\
 & \lesssim\Big(\fint_{B}|[b,T]f(y)-c_{1}-Tf_{2}(c_{B})b(y)|^{\delta}dy\Big)^{\frac{1}{\delta}}+|Tf_{2}(c_{B})|\Big(\fint_{B}|b(y)-b_{2B}|^{\delta}dy\Big)^{\frac{1}{\delta}}\\
 & \lesssim\Big(\fint_{B}|[b,T]f(y)-c_{1}-Tf_{2}(c_{B})b(y)|^{\delta}dy\Big)^{\frac{1}{\delta}}+\mathcal{M}_{T}f(x)\|b\|_{BMO}.
\end{align*}
Note that this yields that 
\[
M_{\sharp,\delta}([b,T]f)(x)\lesssim\sup_{x\in B}\inf_{c_{1}\in\mathbb{R}}\Big(\fint_{B}|[b,T]f(y)-c_{1}-Tf_{2}(c_{B})b(y)|^{\delta}dy\Big)^{\frac{1}{\delta}}+\mathcal{M}_{T}f(x)\|b\|_{BMO}.
\]
Observe that if we call 
\[
g(x):=\sup_{x\in B}\inf_{c_{1}\in\mathbb{R}}\Big(\fint_{B}|[b,T]f(y)-c_{1}-Tf_{2}(c_{B})b(y)|^{\delta}dy\Big)^{\frac{1}{\delta}}
\]
by hypothesis we have that 
\[
\|gw\|_{L^{\infty}}\le c_{w}\|b\|_{BMO}\|fw\|_{L^{\infty}}
\]
and hence by Theorem \ref{thm:ExtrapInfty} we have that for all
$1<q<\infty$ and every $w\in A_{q}$ 
\[
\|g\|_{L^{q}(w)}\le\tilde{c}_{w}\|b\|_{BMO}\|f\|_{L^{q}(w)}.
\]
Since by hypothesis as well we know that 
\[
\|\mathcal{M}_{T}f\|_{L^{p}(v)}\le\tilde{c}_{v}\|f\|_{L^{p}(v)}.
\]
We have that combining the estimates above,
\[
\|M_{\sharp,\delta}([b,T]f)\|_{L^{p}(v)}\le\tilde{c}_{v}\|b\|_{BMO}\|f\|_{L^{p}(v)}.
\]
Then the desired estimate 
\[
\|[b,T]f\|_{L^{p}(v)}\le\tilde{c}_{v}\|b\|_{BMO}\|f\|_{L^{p}(v)}.
\]
follows from the Fefferman-Stein's inequality.

\section{Multilinear counterparts\label{sec:MlC}}

In this section we present multilinear versions of the results presented
above. We begin providing a counterpart of Theorem \ref{thm:OscLinear}.

\begin{thm}
\label{lem:multi}Let $b\in BMO$ and $T$ be an $m$-linear $CZO$.
Then for every ball $B$, if $\delta\in(0,1/m)$ then 
\begin{align*}
 & \inf_{c_{1}\in\mathbb{R}}\Big(\fint_{B}|[b,T]_{1}(\vec{f})(y)-c_{1}-\left(T(\vec{f})-T(\vec{f}\chi_{2B})\right)b(y)|^{\delta}dy\Big)^{\frac{1}{\delta}}\\
 & \lesssim c_{w}\|b\|_{BMO}\prod_{i=1}^{\infty}\|f_{i}w_{i}\|_{L^{\infty}}\essinf_{x\in B}\frac{1}{w(x)},
\end{align*}
where $(w_{1},\ldots,w_{m})\in A_{(\infty,\ldots,\infty)}$ and $w=\prod_{i=1}^{m}w_{i}$.
Consequently, the following inequality holds as well
\[
\inf_{c_{1},c_{2}\in\mathbb{R}}\Big(\fint_{B}|[b,T]_{1}(\vec{f})(y)-c_{1}-c_{2}b(y)|^{\delta}dy\Big)^{\frac{1}{\delta}}\lesssim c_{w}\|b\|_{BMO}\prod_{i=1}^{\infty}\|f_{i}w_{i}\|_{L^{\infty}}\essinf_{x\in B}\frac{1}{w(x)},
\]
\end{thm}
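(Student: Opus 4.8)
The plan is to mimic the proof of Theorem~\ref{thm:OscLinear} line by line, replacing its three ingredients by multilinear analogues. First, an $m$-linear $CZO$ maps $L^{1}\times\dots\times L^{1}$ into $L^{1/m,\infty}$, so the Kolmogorov inequality gives, for $0<s<1/m$ and any $g_{1},\dots,g_{m}$ supported in $2B$,
\[
\Big(\fint_{B}|T(g_{1},\dots,g_{m})|^{s}\Big)^{\frac{1}{s}}\lesssim\prod_{i=1}^{m}\fint_{2B}|g_{i}|;
\]
this is where $\delta<1/m$ enters. Second, from $\vec{w}\in A_{(\infty,\dots,\infty)}$ one has, for every cube $Q$ and every tuple $\vec{h}$,
\[
\prod_{i=1}^{m}\fint_{Q}|h_{i}|\le\Big(\prod_{i=1}^{m}\|h_{i}w_{i}\|_{L^{\infty}}\Big)\prod_{i=1}^{m}\fint_{Q}w_{i}^{-1}\le[\vec{w}]_{A_{(\infty,\dots,\infty)}}\Big(\prod_{i=1}^{m}\|h_{i}w_{i}\|_{L^{\infty}}\Big)\essinf_{x\in Q}\frac{1}{w(x)},
\]
and since $\essinf_{x\in Q'}\frac{1}{w(x)}\le\essinf_{x\in Q}\frac{1}{w(x)}$ for $Q\subseteq Q'$, averages over any dilate of $B$ stay controlled by $\essinf_{x\in B}\frac{1}{w(x)}$. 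Third, each $w_{i}^{-1}$ belongs to $A_{\infty}$ (standard for weights in $A_{(\infty,\dots,\infty)}$), so it enjoys a reverse H\"older inequality and $\fint_{Q}|b-b_{Q}|w_{i}^{-1}\lesssim\|b\|_{BMO}\fint_{Q}w_{i}^{-1}$, with implicit constant depending only on $[\vec{w}]_{A_{(\infty,\dots,\infty)}}$.

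Fix a ball $B$ and abbreviate $R_{B}=[\vec{w}]_{A_{(\infty,\dots,\infty)}}\|b\|_{BMO}\big(\prod_{i=1}^{m}\|f_{i}w_{i}\|_{L^{\infty}}\big)\essinf_{x\in B}\frac{1}{w(x)}$, the bound we want (up to absolute constants). As in the linear proof I would split $f_{i}=f_{i}\chi_{2B}+f_{i}\chi_{(2B)^{c}}$, expanding $T(\vec{f})$ by multilinearity into the all-near term $T(\vec{f}\chi_{2B})$ plus an off-diagonal part $T_{\mathrm{off}}$, each of whose $2^{m}-1$ summands has at least one slot supported in $(2B)^{c}$; the constant subtracted in the statement is $T_{\mathrm{off}}(c_{B})$ (this is the meaning of $T(\vec{f})-T(\vec{f}\chi_{2B})$ there, just as $T(f\chi_{(2B)^{c}})(c_{B})$ occurs in Theorem~\ref{thm:OscLinear}). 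Using $[b,T]_{1}=[b-b_{2B},T]_{1}$ and absorbing scalar constants into $c_{1}$, one is reduced to bounding $(\fint_{B}|\cdot|^{\delta})^{1/\delta}$ of
\[
\text{(I)}\ (b(y)-b_{2B})T(\vec{f}\chi_{2B})(y),\qquad\text{(II)}\ (b(y)-b_{2B})\big(T_{\mathrm{off}}(y)-T_{\mathrm{off}}(c_{B})\big),\qquad\text{(III)}\ T\big((b-b_{2B})f_{1},f_{2},\dots,f_{m}\big)(y).
\]
Term (I) is the analogue of $L_{11}$: H\"older (with a fixed $\delta<\varepsilon<1/m$), \eqref{eq:jn} for the $b$-factor and the first two ingredients for $T(\vec{f}\chi_{2B})$ give $\lesssim R_{B}$. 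Expanding $(b-b_{2B})f_{1}$ and each $f_{i}$ ($i\ge2$) into near/far parts splits (III) into $2^{m}$ summands; the all-near one is $\lesssim\fint_{2B}|b-b_{2B}||f_{1}|\prod_{i\ge2}\fint_{2B}|f_{i}|$ by the first ingredient, hence $\lesssim R_{B}$ by the third ingredient (applied to $w_{1}^{-1}$) and the second.

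What remains --- Term (II) and the other summands of (III), all with at least one slot in $(2B)^{c}$ --- is the main obstacle. For each such term I would subtract its value at $c_{B}$ (absorbed into $c_{1}$), use the smoothness of the $m$-linear kernel in the first variable, and decompose the far slot(s) into annuli $2^{j+1}B\setminus2^{j}B$. Two features differ from the linear case. First, a summand may mix near and far slots: the near-slot averages then live on $2B$ rather than on the dilate $2^{j+1}B$ carrying the far-slot averages, so applying the second ingredient on a single cube costs a factor $2^{jn}$ per near slot when $\fint_{2B}w_{i}^{-1}$ is replaced by $\fint_{2^{j+1}B}w_{i}^{-1}$ --- and this is cancelled exactly by the extra decay of $\big(\sum_{i}|x-y_{i}|\big)^{-mn}$ forced by the near slots lying in $2B$ while some far slot sits at distance $\sim2^{j}l(B)$. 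Second, when several slots are far one gets a multi-indexed annular sum; the $b$-factor is handled by \eqref{eq:jn} when it is the outer factor $b-b_{2B}$ of Term (II) and by the third ingredient when it is the first slot of (III), in the latter case picking up an extra factor of order $j$ on far slots through \eqref{eq:prop2jBBbmo}. Once the dilation factors cancel, every contribution collapses to a convergent series of the type $\sum_{j}j\,\omega(2^{-j})$ (or simpler) --- finite precisely by the $\log$-Dini condition built into the definition of $m$-linear $CZO$ --- times $R_{B}$, by the second ingredient over the cubes $2^{j+1}B$.

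Summing the contributions of (I), (II) and the $2^{m}$ summands of (III) proves the first displayed inequality, with $c_{w}\sim[\vec{w}]_{A_{(\infty,\dots,\infty)}}$. The ``consequently'' statement is then immediate: $T_{\mathrm{off}}(c_{B})$ is a constant depending only on $\vec{f}$ and $B$, hence an admissible choice of $c_{2}$, so the second infimum does not exceed the first.
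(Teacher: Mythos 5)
Your proposal is correct and follows essentially the same route as the paper's proof: the same choice of constants (the values at $c_{B}$ after replacing $b$ by $b-b_{2B}$, with $c_{2}=\big(T(\vec f)-T(\vec f\chi_{2B})\big)(c_{B})$), Kolmogorov plus the weak $L^{1/m}$ endpoint bound for the all-near pieces, kernel smoothness over annuli combined with \eqref{eq:jn}, \eqref{eq:prop2jBBbmo} and the log-Dini condition for the far pieces, and the $A_{(\infty,\dots,\infty)}$ condition together with $w_{1}^{-1}\in A_{\infty}$ to absorb the weight. The only cosmetic difference is that you expand $T(\vec f)-T(\vec f\chi_{2B})$ into its $2^{m}-1$ mixed near/far summands and track the dilation factors term by term, whereas the paper estimates this quantity as a single object via one annular decomposition of the complement of $(2B)^{m}$; the resulting bounds coincide.
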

\begin{proof}[Proof of Theorem \ref{lem:multi}]
For notational convenience we may denote 
\[
T_{B}(\vec{f})=T(\vec{f})-T(\vec{f}\chi_{2B}).
\]
Let 
\begin{align*}
c_{1}(B) & =-b_{2B}T_{B}(\vec{f})(c_{B})-T_{B}\big((b-b_{2B})f_{1},f_{2},\ldots,f_{m}\big)(c_{B})
\end{align*}
and 
\begin{align*}
c_{2}(B)=T_{B}(\vec{f})(c_{B}).
\end{align*}
Having that notation in mind we have that 
\begin{align*}
\Big(\fint_{B} & |[b,T]_{1}(\vec{f})(y)-c_{1}(B)-c_{2}(B)b(y)|^{\delta}dy\Big)^{\frac{1}{\delta}}\\
 & =\Big(\fint_{B}|[b-b_{2B},T]_{1}(\vec{f})(y)-c_{1}(B)-c_{2}(B)b(y)|^{\delta}dy\Big)^{\frac{1}{\delta}}\\
 & \lesssim\Big(\fint_{B}|(b-b_{2B})T(\vec{f})(y)-(b-b_{2B})T_{B}(\vec{f})(c_{B})|^{\delta}dy\Big)^{\frac{1}{\delta}}\\
 & \hspace{1cm}+\Big(\fint_{B}|T((b-b_{2B})f_{1},\ldots,f_{m})(y)-T_{B}((b-b_{2B})f_{1},\ldots,f_{m})(c_{B})|^{\delta}dy\Big)^{\frac{1}{\delta}}\\
 & =:I_{1}+I_{2}.
\end{align*}
For $I_{1}$, we have 
\begin{align*}
I_{1} & \lesssim\Big(\fint_{B}|b-b_{2B}|^{\delta}\cdot|T_{B}(\vec{f})(y)-T_{B}(\vec{f})(c_{B})|^{\delta}dy\Big)^{\frac{1}{\delta}}+\Big(\fint_{B}|b-b_{2B}|^{\delta}\cdot|T(\vec{f}\chi_{2B})(y)|^{\delta}dy\Big)^{\frac{1}{\delta}}\\
 & \lesssim\|b\|_{BMO}\Big(\sup_{y\in B}|T_{B}(\vec{f})(y)-T_{B}(\vec{f})(c_{B})|+\|T(\vec{f}\chi_{2B})\|_{L^{\frac{1}{m},\infty}(B,\frac{dx}{|B|})}\Big)\\
 & \lesssim\|b\|_{BMO}\Big(\sum_{k=1}^{\infty}\omega(2^{-k})\prod_{i=1}^{m}\fint_{2^{k}B}|f_{i}|+\prod_{i=1}^{m}\fint_{2B}|f_{i}|\Big)\\
 & \le\|b\|_{BMO}[\vec{w}]_{A_{(\infty,\ldots,\infty)}}\Big(\prod_{i=1}^{\infty}\|f_{i}w_{i}\|_{L^{\infty}}\Big)\essinf_{x\in B}\frac{1}{w(x)},
\end{align*}
where we have used the weak endpoint estimate of $T$. Now we turn
to estimate $I_{2}$. We argue as follows.
\begin{align*}
I_{2} & \lesssim\Big(\fint_{B}|T_{B}((b-b_{2B})f_{1},\ldots,f_{m})(y)-T_{B}((b-b_{2B})f_{1},\ldots,f_{m})(c_{B})|^{\delta}dy\Big)^{\frac{1}{\delta}}\\
 & \hspace{1cm}+\Big(\fint_{B}|T((b-b_{2B})f_{1}\chi_{2B},\ldots,f_{m}\chi_{2B})(y)|^{\delta}dy\Big)^{\frac{1}{\delta}}\\
 & =:I_{21}+I_{22}.
\end{align*}
The estimate of $I_{22}$ can be handled similarly as before, that
is, we use Kolmogorov inequality and then the weak type endpoint estimate.
\begin{align*}
I_{22} & \lesssim\Big(\fint_{2B}|b-b_{2B}||f_{1}|\Big)\prod_{i=2}^{m}\fint_{2B}|f_{i}|\\
 & \le[\vec{w}]_{A_{(\infty,\ldots,\infty)}}\Big(\prod_{i=1}^{m}\|f_{i}w_{i}\|_{L^{\infty}}\Big)\essinf_{x\in B}\frac{1}{w(x)}\cdot\frac{1}{w_{1}^{-1}(2B)}\int_{2B}|b-b_{2B}|w_{1}^{-1}\\
 & \lesssim\|b\|_{BMO}[\vec{w}]_{A_{(\infty,\ldots,\infty)}}\Big(\prod_{i=1}^{m}\|f_{i}w_{i}\|_{L^{\infty}}\Big)\essinf_{x\in B}\frac{1}{w(x)},
\end{align*}
where the last inequality holds since $w_{1}^{-1}\in A_{\infty}$.

It remains to consider $I_{21}$. Similarly as before we have that 
\begin{align*}
I_{21} & \lesssim\sum_{k=1}^{\infty}\omega(2^{-k})\fint_{2^{k}B}|b-b_{2B}||f_{1}|\prod_{i=2}^{m}\fint_{2^{k}B}|f_{i}|\\
 & \le\sum_{k=1}^{\infty}\omega(2^{-k})\fint_{2^{k}B}|b-b_{2^{k}B}||f_{1}|\prod_{i=2}^{m}\fint_{2^{k}B}|f_{i}|+\sum_{k=1}^{\infty}\omega(2^{-k})|b_{2B}-b_{2^{k}B}|\prod_{i=1}^{m}\fint_{2^{k}B}|f_{i}|\\
 & \lesssim\|b\|_{BMO}[\vec{w}]_{A_{(\infty,\ldots,\infty)}}\Big(\prod_{i=1}^{m}\|f_{i}w_{i}\|_{L^{\infty}}\Big)\essinf_{x\in B}\frac{1}{w(x)},
\end{align*}
where we have used that 
\[
|b_{2B}-b_{2^{k}B}|\lesssim k\|b\|_{BMO}.
\]
This requires that the kernel satisfies the log-Dini condition. This
completes the proof.
\end{proof}
Having the Theorem above at our disposal we can obtain the following
result.
\begin{thm}
\label{thm:ExtrapMulti}Let $T$ be an $m$-linear operator such that for
every $b\in BMO$ and every $\vec w\in A_{(\infty,\dots, \infty)}$, 
\begin{align*}
 & \inf_{c_{1}\in\mathbb{R}}\Big(\fint_{B}|[b,T]_{1}(\vec{f})(y)-c_{1}-\left(T(\vec{f})-T(\vec{f}\chi_{2B})\right)(y)b(y)|^{\delta}dy\Big)^{\frac{1}{\delta}}\\
 & \lesssim c_{w}\|b\|_{BMO}\prod_{i=1}^{\infty}\|f_{i}w_{i}\|_{L^{\infty}}\essinf_{x\in B}\frac{1}{w(x)}
\end{align*}
and such that Lerner's grand maximal operator
\[
\mathcal{M}_{T}(\vec f)(x)=\sup_{x\in B}\esssup_{z\in B}T(\vec{f}\chi_{(2B)^{c}})(z)
\]
satisfies
\[
\|\mathcal{M}_{T}(\vec f)v\|_{L^{p}}\le c_{\vec v}\prod_{i=1}^m\|f_iv_i\|_{L^{p_i}}
\]for 
some $\vec p$ with $p_i>1$, $i=1,\dots, m$, 
and some $\vec v\in A_{\vec p}$. Then 
\[
\| [b,T]_1(\vec f) v\|_{L^{p}}\leq c_{\vec{v},T}\|b\|_{BMO}\prod_{i=1}^{m}\|f_i v_i\|_{L^{p_i}}.
\]
 \end{thm}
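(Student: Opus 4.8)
The plan is to mimic the proof of Theorem~\ref{thm:ExtrapLinear} in the multilinear setting, using Theorem~\ref{lem:multi} as the analogue of Theorem~\ref{thm:OscLinear} and the multilinear endpoint extrapolation theorem quoted at the end of Section~\ref{sec:Preliminaries} in place of Theorem~\ref{thm:ExtrapInfty}. Fix a ball $B$ and a point $x\in B$, and write $f_i=f_i\chi_{2B}+f_i\chi_{(2B)^c}$ as usual, abbreviating $T_B(\vec f)=T(\vec f)-T(\vec f\chi_{2B})$. Exactly as in the linear proof one replaces the ``good'' constant $c_1$ appearing in the hypothesis by $c=c_1+T_B(\vec f)(c_B)b_{2B}$; adding and subtracting $T_B(\vec f)(c_B)b(y)$ and using the triangle inequality (here $\delta<1/m<1$, so $\|\cdot\|_\delta$ is only a quasinorm, but the implicit constant in the triangle-type inequality depends only on $\delta$) one bounds
\[
\Big(\fint_B|[b,T]_1(\vec f)(y)-c|^\delta dy\Big)^{1/\delta}
\lesssim \Big(\fint_B|[b,T]_1(\vec f)(y)-c_1-T_B(\vec f)(c_B)b(y)|^\delta dy\Big)^{1/\delta}+|T_B(\vec f)(c_B)|\,\Big(\fint_B|b-b_{2B}|^\delta dy\Big)^{1/\delta}.
\]
The last factor is $\lesssim\|b\|_{BMO}$ by \eqref{eq:jn}, and $|T_B(\vec f)(c_B)|\le\mathcal M_T(\vec f)(x)$ since $x\in B$ and $T_B(\vec f)=T(\vec f\chi_{(2B)^c})$. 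Taking the supremum over balls $B\ni x$ gives the pointwise control
\[
M_{\sharp,\delta}\big([b,T]_1(\vec f)\big)(x)\lesssim g(x)+\mathcal M_T(\vec f)(x)\|b\|_{BMO},
\]
where $g(x)$ is the supremum over $B\ni x$ of the first term on the right above.

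Next I would feed the hypothesis of the theorem into the extrapolation machinery. The assumed oscillation bound says precisely that for all $\vec w\in A_{(\infty,\dots,\infty)}$,
\[
\|gw\|_{L^\infty}\le c_{\vec w}\|b\|_{BMO}\prod_{i=1}^m\|f_iw_i\|_{L^\infty},
\]
so applying the multilinear endpoint extrapolation theorem from Section~\ref{sec:Preliminaries} yields, for every $\vec q$ with $q_i>1$ and every $\vec w\in A_{\vec q}$,
\[
\|g\,w\|_{L^q}\le \tilde c_{\vec w}\|b\|_{BMO}\prod_{i=1}^m\|f_iw_i\|_{L^{q_i}}.
\]
In particular we may take $\vec q=\vec p$ and $\vec w=\vec v$. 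Combining this with the assumed boundedness $\|\mathcal M_T(\vec f)v\|_{L^p}\le c_{\vec v}\prod_i\|f_iv_i\|_{L^{p_i}}$ and the pointwise estimate for the sharp maximal function gives
\[
\|M_{\sharp,\delta}\big([b,T]_1(\vec f)\big)v\|_{L^p}\le \tilde c_{\vec v}\|b\|_{BMO}\prod_{i=1}^m\|f_iv_i\|_{L^{p_i}}.
\]

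Finally I would invoke the Fefferman--Stein inequality $\|h\|_{L^p(\mu)}\lesssim\|M_{\sharp,\delta}h\|_{L^p(\mu)}$, valid for $\mu=v^p\in A_\infty$ (note $v\in A_{\vec p}$ forces $v^p$, or the relevant power, to lie in $A_\infty$), applied to $h=[b,T]_1(\vec f)$, to conclude
\[
\|[b,T]_1(\vec f)v\|_{L^p}\le c_{\vec v,T}\|b\|_{BMO}\prod_{i=1}^m\|f_iv_i\|_{L^{p_i}}.
\]
As in the linear case one should first argue for $\vec f$ with bounded compactly supported entries so that all quantities are finite and $[b,T]_1(\vec f)\in L^p(v^p)$ a priori (which one knows since $[b,T]_1$ is already known to be $L^p$-bounded, or by a limiting/truncation argument), and then remove that restriction by density. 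The main obstacle I anticipate is bookkeeping rather than depth: verifying that the Fefferman--Stein inequality is legitimately applicable here, i.e.\ that the underlying weight $v^p$ is in $A_\infty$ and that $[b,T]_1(\vec f)$ is a priori in the relevant $L^p$ space so that the inequality is not vacuous, and keeping careful track of the $\delta<1/m$ quasinorm constants through the triangle-inequality step. Everything else is a direct transcription of the proof of Theorem~\ref{thm:ExtrapLinear}.
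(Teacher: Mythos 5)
Your proposal is correct and follows essentially the same route as the paper: the same shift of the constant by $T_B(\vec f)(c_B)b_{2B}$ to get the pointwise bound $M_{\sharp,\delta}([b,T]_1(\vec f))\lesssim g+\|b\|_{BMO}\,\mathcal{M}_T(\vec f)$, followed by the multilinear endpoint extrapolation theorem applied to $g$, the assumed bound for $\mathcal{M}_T$, and the Fefferman--Stein inequality. Your extra remarks on the quasinorm constants, the $A_\infty$ condition for $v^p$, and the a priori finiteness are sensible bookkeeping that the paper leaves implicit.
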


\begin{proof}
The argument is analogous to the one given for the linear case. Let
us define 
\[
g(x)=\sup_{B\ni x}\inf_{c_{1}\in\mathbb{R}}\Big(\fint_{B}|[b,T]_{1}(\vec{f})(y)-c_{1}-\left(T(\vec{f})-T(\vec{f}\chi_{2B})\right)(y)b(y)|^{\delta}dy\Big)^{\frac{1}{\delta}},
\]
then, arguing as in the linear case, we have that
\begin{align*}
M_{\sharp,\delta}([b,T]_{1}(\vec{f}))(x) & \le\sup_{B\ni x}\Big(\fint_{B}|[b,T]_{1}(\vec{f})(y)-c(B)|^{\delta}dy\Big)^{\frac{1}{\delta}}\\
 & \lesssim g(x)+\sup_{B\ni x}\Big(\fint_{B}|b(y)-b_{2B}|^{\delta}dy\Big)^{\frac{1}{\delta}}|T_{B}(\vec{f})(c_{B})|\\
 & \lesssim g(x)+\|b\|_{BMO}\mathcal{M}_{T}(\vec{f})(x).
\end{align*}
To deal with $g$ we argue as in the linear setting. $\mathcal{M}_{T}$
is bounded by hypothesis. Hence we are done.
\end{proof}
Note that in the case $T$ being a Calderón-Zygmund operator, in the
bilinear case, careful calculus to bound $\mathcal{M}_T$ was presented in \cite{Li}. Such an estimate, that we recall in the following line, 
can be extended to the multilinear case directly. Therefore, we have
\[
\mathcal{M}_{T}(\vec{f})(x)\lesssim M(\vec{f})(x)+M_{s}(T(\vec{f}))(x)
\]
for every $0<s<\frac{1}{m}$. Of course, since $M_{s}$ is increasing
with $s$, the inequality holds for all $s>0$. In particular, we
can choose $s=\frac{1}{m}$, which in turn allows us to show that 
\[
\|M_{1/m}(T(\vec{f}))w\|_{L^{p}}=\|M(T(\vec{f})^{1/m})\|_{L^{mp}(w^{p})}^{m}\lesssim\|T(\vec{f})w\|_{L^{p}}\lesssim\prod_{i=1}^{m}\|f_{i}w_{i}\|_{L^{p}},
\]
where we have used the fact that if $(w_{1},\ldots,w_{m})\in A_{\vec{p}}$
then $w^{p}\in A_{mp}$ (when $p<\infty$). This ends the argument and completes an alternative proof of the boundedness of $[b,T]_i$ in the multilinear setting.

\bibliographystyle{abbrvnat}

{\footnotesize\bibliography{Refs}}

\end{document}